\documentclass[12pt]{article}
\usepackage{amsmath,amsfonts,amssymb,amsthm,amscd}
\usepackage{xcolor}
\title{Remarks on relative categoricity}
\date{\today}
\author{Anand Pillay\thanks{Supported by NSF grants  DMS-2054271 and DMS-2502292}\\{University of Notre Dame}}

\newtheorem{Theorem}{Theorem}[section]
\newtheorem{Proposition}[Theorem]{Proposition}
\newtheorem{Definition}[Theorem]{Definition}
\newtheorem{Remark}[Theorem]{Remark}
\newtheorem{Lemma}[Theorem]{Lemma}
\newtheorem{Corollary}[Theorem]{Corollary}
\newtheorem{Fact}[Theorem]{Fact}

\newtheorem{Conjecture}[Theorem]{Conjecture}

\newcommand{\Z}{\mathbb Z}

\begin{document}
\maketitle

\begin{abstract}  This paper is partly a survey with historical background and references, partly provides the opportunity to put in print some unpublished early work of mine, and partly has some new results. 
Precise definitions will be given below, but roughly speaking $T$ will be a complete theory in a countable language $L$  with a distinguished unary predicate $P$.  $T$ is relatively categorical if any isomorphism between the $P$-parts of two models of $T$ lifts to an isomorphism between the two models.  It was conjectured that if $T$ is relatively categorical then any model of $T^{P}$ (the relativization of $T$ to $P$) is the $P$-part of a model of $T$ (the Gaifman property, also called``existence").  This remains open.  

We identify a special case of relative categoricity, namely  when $T$ is an ``almost internal cover" of $T^{P}$ and prove the Gaifman conjecture in this case. We introduce the notion of (full) $\omega$-stability over $P$ and show it implies the existence of constructible models over all ``complete" sets. We give an example of $T$ which is relatively categorical and an  internal cover of $T^{P}$ but is not  fully $\omega$-stable over $P$, fact not even fully stable over $P$ in the sense of \cite{Usvyatsov}. 
\end{abstract}

\section{Introduction and preliminaries}
The subject of relative categoricity was started by Gaifman in \cite{Gaifman}  with his ``single-valued operations", which was aimed at trying to find a logical framework for understanding and studying 
certain mathematical constructions.  One example is passing from an integral domain $R$ to its field of fractions $Frac(R)$. Here $Frac(R)$ is explicitly definable from $R$, in fact interpretable in $R$.  Another is passing from a field $F$ to an $n$-dimensional 
vector space $V$ over $F$.  Here  $V$ is what we call ``internal" to $F$: it is interpretable in $F$ only after fixing or naming a basis for $V$.  The case of explicit definability or interpretability coincides with Hodges' word constructions \cite{Hodges-word}. 

The actual definition of these ``Gaifman operations" involved rather ``implicit" definability.  The original definition involved two countable languages $L^{-}\subseteq L$ and a unary predicate symhol $P\in L\setminus L^{-}$. But it is easy to simplify the presentation as follows:  From now on we take  $T$ to be a complete theory  which has quantifier elimination in a countable relational language $L$. And $P$ will be a distinguished unary predicate symbol in $L$.  For $M\models T$, $M^{P}$ denotes the  $L$-substructure of $M$ with universe $P(M)$, and $T^{P}$ denotes the common complete theory of these $M^{P}$ as $M$ ranges over models of $T$.  
We assume that $T$ says that $P$ is infinite (to avoid trivialties). Note that the so called uniform reduction property is now built in to the set-up:  For any $L$-formula $\phi(x)$, there is an $L$-formula  $\psi(x)$ such that for any model 
$M$ of $T$ and tuple $a$ from $P(M)$, $M\models \phi(a)$ iff $M^{P}\models \psi(a)$. (Let $\psi$ be the quantifier-free formula which is equivalent to $\phi$ in $T$.)
$\kappa$ denotes an infinite cardinal. 
\begin{Definition} (i) $T$ is relatively categorical  if whenever $M_{1}, M_{2}$ are models of $T$ and $f$ is an isomorphism between $M_{1}^{P}$ and $M_{2}^{P}$ then $f$ lifts to an isomorphism between $M_{1}$ and $M_{2}$.
\newline
(ii) $T$ is relatively $\kappa$-categorical if the right hand side of (i) holds under the assumption that $M_{1}^{P}$ (and $M_{2}^{P}$) have cardinality $\kappa$.
\newline
(iii) $T$ is  $(\kappa, \kappa)$-categorical if the right hand side of (i) holds under the assumption that both $M_{i}$ and $M_{i}^{P}$ have cardinality $\kappa$ for $i=1,2$.
\end{Definition}

We will only really discuss $(\kappa,\kappa)$-categoricity when $\kappa = \omega$. 

\begin{Definition} $T$ has the Gaifman property (also called ``$P$-existence")  if for any $N\models T^{P}$ there is $M\models T$ such that $N = M^{P}$.
\end{Definition}

\begin{Conjecture} If $T$ is relatively categorical, then $T$ has the Gaifman property.
\end{Conjecture} 

This conjecture is   attributed to Haim Gaifman, the reference being the {\bf Problem} on p. 30 of \cite{Gaifman}, where the situation is compared to Beth's theorem (where no new sorts are added).   Actually the paper \cite{Gaifman} contains at the end a note added in  June 1978, with ``syntactic" characterizations of relative categoricity, under some additional assumptions, but with no proofs.  One of these additional properties is ``rigidity"; for every model $M$ of $T$, $Aut(M/P(M))$ is trivial.  The  syntactic characterization is that $M$ is explicitly definable from $M^{P}$ (as in the case of passing from an integral domain to its field of fractions),  namely for any model $M$ of $T$, $M = dcl_{M}(P(M))$ which therefore must happen uniformly.   A proof appears in \cite{Hodges-book}  (see Theorem 12.5.8 there),  but it is close to a triviality. In any case a positive answer to Conjecture 1.3 holds in this case.  Another property is ``weak rigidity", namely   for any $M\models T$,  $|Aut(M/P(M))| <  2^{|M|}$.  The syntactic characterization is that  any model $M$ of $T$ is explicitly definable from $P(M)$, but over additional parameters from $M$ (and happens uniformly). This is precisely what we now call ``internality".  I  give a proof later in this section. 
Later in this paper I will consider the slightly weaker property of ``almost internality" where every model of $T$ is in the algebraic closure of $P(M)$ together with additional parameters from $M$ (so uniformly so), where we obtain a positive solution to Conjecture 1.3.

In the first part of my Ph.D. thesis \cite{Pillay-thesis} I  proved Conjecture 1.3, using what Gaifman said in \cite{Gaifman-conjecture}  was a  theorem of Shelah (for $M_{1}, M_{2}$ models of $T$ any elementary embedding of
$M_{1}^{P}$ in $M_{2}^{P}$ extends to an elementary embedding of $M_{1}$ in $M_{2}$). It turns out that such a theorem had not been proved, although it is restated as a theorem in \cite{Hodges-book} (see Lemma 12.5.5 there).  Nevertheless that first part of my thesis did have some content, including giving a counterexample to a conjecture of Hodges.  The second part of my thesis was on problems that I  formulated myself, including the conjecture that a countable complete theory $T$ with a minimal model (no proper elementary submodel) has infinitely many countable models, up to isomorphism.  See the expository paper \cite{Pillay-Tanovic} for a discussion of work on this problem as well as on Vaught's conjecture.  

In 1975, after hearing from Gaifman about what Shelah was supposed to have proved, I asked Shelah (in the Logic Colloquium in Clermont-Ferrand) if he could send me the proof, or relevant notes. He sent me some notes in a letter,  which were not on the purported theorem mentioned by Gaifman, but on the beginnings of stability over a predicate. I could only make sense of them around 10 years later when I was asked to referee a paper by Shelah which consisted precisely of the letter he had sent in 1975.  I partly rewrote the paper and it became the joint paper \cite{Pillay-Shelah}.  In \cite{Pillay-omega-categoricity}, written in 1982,  I studied what we call here  $(\omega,\omega)$-categoricity, with a detailed structural analysis. I recently found in an old file a handwritten paper of mine on $\omega$-stability over a predicate, also from 1982.  An  improved and updated version will appear in Section 2 of the current paper.

 The paper \cite{Pillay-Shelah} was continued in Shelah's \cite{Shelah-classification-II}.  It was a kind of stream of consciousness paper and unclear to me what was actually proved. 
But according to the 2024 Master's thesis of  AlZurba (supervised by  I. Kaplan) \cite{AlZurba}, there was a structure side and a nonstructure side. The structure side is what AlZurba's  thesis was about and consisted of proving the Gaifman property under the assumption of the stability (over $P$) of $I$-systems.  The content of the nonstructure side seems to be that this  stability assumption  holds if $T$ is relatively categorical in every forcing extension of the ground model.  Even assuming this works,  Conjecture 1.3 remains open in full entirety. 
In \cite{Hart-Shelah}  for any $k$, a theory $T_{k}$ is produced which is relatively $\kappa$-categorical for all infinite $\kappa \leq \aleph_{k-1}$ but is not $2^{\aleph_{k-1}}$-categorical.  So there is no Morley's theorem for relative categoricity. The same construction produces an $L_{\omega_{1},\omega}$ sentence which is $\kappa$-categorical for all $\kappa \leq \aleph_{k-1}$ but not $2^{\aleph_{k-1}}$-categorical.  

The occasion for my coming back to the topic was seeing two papers on the subject on the arXiv in 2025, by Usvyatsov \cite{Usvyatsov} and Shelah and Usvyatsov \cite{S-U} (building on \cite{S-U-22}).  We looked at both of them in the Notre Dame model theory seminar.  The paper \cite{Usvyatsov} proved the Gaifman property under a strong stability over $P$ property (the ``stability" of {\em all} ``complete sets").   It is thematically close to what I write in Section 2.  And \cite{S-U} proves the Gaifman property under a weaker assumption (stability of ``good systems").

The rest of this section  gives the basic definitions and recalls some basic results. I also give proofs of (somewhat stronger versions of) Gaifman's syntactic characterizations  under (weak) rigidity assumptions. 
In Section 2 we study a strong form of relative categoricity, when $T$ is an ``almost internal cover" of $T^{P}$, proving Conjecture 1.3 in this case.  We also  study  $\omega$-stability or $\omega$-stability over $P$, in a rewrite of an old manuscript of mine from 1982, proving that if $T$ is (fully) $\omega$-stable over $P$, then over any complete set there is a constructible model.  We give an example of a relatively categorical $T$ which is an almost internal cover of $T^{P}$ but is not (fully) $\omega$-stable over $P$, not even fully stable over $P$ in the sense of \cite{Usvyatsov}.  The example also shows that full stability over $P$ is not preserrved when passing to $T^{eq}$.

Thanks to Julia Knight for presenting parts of \cite{S-U} in our seminar and several discussions on the subject. Thanks also to Itay Kaplan for  comments. 

We typically work in a saturated model $\bar M$ of $T$. As usual models and sets will be small subsets of $\bar M$. Truth is in the sense of $\bar M$.  $x,y,z$ denote finite tuple of variables unless said otherwise. Our assumptions from above are in place. Namely $T$ is a complete theory with quantifier elimination in a countable relational language $L$ and $P$ is a unary predicate in $L$. 
 
The following definition is from \cite{Pillay-Shelah}.  It was given there under a ``stable embeddedness" assumption, but makes sense in general as a kind of Tarski-Vaught property (relative to $P$). 

\begin{Definition} The set $A$ is said to be complete if for any formula $\phi(x)$ with parameters from $A$, if $\models \exists x(P(x)\wedge \phi(x))$ then there is $b\in P(A)$ such that $\models\phi(b)$.
\end{Definition} 

\begin{Remark} Any elementary substructure $N$ of ${\bar M}^{P}$ is complete.
\end{Remark}
\begin{proof} Let $\phi(x)$ have parameters from $N$, and $\models \exists x(P(x)\wedge \phi(x))$. As $T$ has $QE$ we may assume $\phi(x)$ is quantifier-free. 
Let $b\in P({\bar M})$ be such that $\models \phi(b)$.  But then (as $\phi$ is quantifier-free), ${\bar M}^{P}\models \phi(b)$. As $N\prec {\bar M}^{P}$, $N\models \phi(b)$.
So $\models \phi(b)$.
\end{proof} 

\begin{Lemma} For any complete countable $A$, there is $M\prec {\bar M}$ such that $A\subseteq M$ and $P(A) = P(M)$.
\end{Lemma}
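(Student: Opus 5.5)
Since $A$ is countable, I will obtain $M$ as a countable model built by a Henkin/Tarski--Vaught construction inside $\bar M$: I enumerate formulas over the growing set and add witnesses one at a time, arranging that no new element of $P$ is ever added. The tool that makes this possible is a lemma that completeness is preserved when we realize suitable types avoiding $P$. The plan is to build an increasing chain $A = A_0\subseteq A_1\subseteq \cdots$ of countable complete sets with $P(A_n) = P(A)$ for all $n$. At each stage we handle one formula $\exists y\,\phi(y)$ with parameters in $A_n$ and $\models\exists y\,\phi(y)$, bookkept so that every such formula over every $A_n$ is eventually treated. Then $M=\bigcup_n A_n$ satisfies the Tarski--Vaught test, so $M\prec\bar M$, and $P(M)=P(A)$.

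The key step is a one-step extension claim. Suppose $B$ is countable and complete, and $\models \exists y\,\phi(y)$ with $\phi$ over $B$. Then there is a tuple $c$ with $\models\phi(c)$ such that $B\cup c$ is complete and $P(B\cup c)=P(B)$.

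First I reduce to the case where $c$ has no coordinates in $P$. Split $y$ into the part in $P$ and the part outside $P$, by choosing a disjunct specifying which coordinates lie in $P$. By completeness of $B$, applied to the formula $\exists y''\,\phi(y',y'')$ with its $P$-conjuncts on $y'$ (completeness extends to tuples by induction on length), the $P$-coordinates can be taken from $P(B)$. So we may assume $\phi(y)\vdash \bigwedge_i\neg P(y_i)$.

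Next I build the type of $c$ over $B$. I would realize a type $p(y)\ni\phi(y)$ over $B$ with the following property: for every formula $\psi(x,y)$ over $B$ with $x$ a single variable, if $\psi(x,y)\wedge P(x)$ is satisfiable together with $p$, then some $b\in P(B)$ has $\psi(b,y)\in p$. This is exactly what makes $B\cup c$ complete, and it forces $P(B\cup c)=P(B)$ automatically. To obtain $p$, I would run an omitting-types style construction. Enumerate the countably many formulas $\psi_k(x,y)$ over $B$, and build an increasing sequence of formulas $\theta_k(y)$ over $B$, each consistent, with $\theta_0=\phi$. At stage $k$, if $\theta_k(y)\wedge\exists x(P(x)\wedge\psi_k(x,y))$ is consistent, we want some $b\in P(B)$ with $\theta_k(y)\wedge\psi_k(b,y)$ consistent. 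Consider the formula $\chi(x):=\exists y(\theta_k(y)\wedge\psi_k(x,y))$, which has parameters in $B$. We have $\models\exists x(P(x)\wedge\chi(x))$, so completeness of $B$ gives $b\in P(B)$ with $\models\chi(b)$. Set $\theta_{k+1}=\theta_k\wedge\psi_k(b,y)$; otherwise set $\theta_{k+1}=\theta_k\wedge\neg\exists x(P(x)\wedge\psi_k(x,y))$. The union of the $\theta_k$ is finitely satisfiable. I would then complete it to a type $p$ and realize it by $c$ in $\bar M$ using saturation. The case split guarantees the required property, since each $\psi$ is handled at the stage where it is enumerated.

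I expect the main obstacle to be making sure this single-variable witnessing genuinely gives completeness of $B\cup c$. The subtle point is that the enumeration must include all formulas over $B$ in $x,y$, and that $P(B\cup c)=P(B)$ uses $\phi\vdash\neg P(y_i)$. Once that is checked, countability of $A$ keeps every $A_n$ countable, so the enumerations are countable and the chain construction goes through. Note that completeness of each $A_n$ is what makes the next step possible, which is why the invariant must be maintained and not merely $P(A_n)=P(A)$.
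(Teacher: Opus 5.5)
Your proof is correct, but it follows a different route from the paper's.

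The paper argues in two lines. Completeness of $A$ says exactly that the partial type $\Sigma(x)=\{P(x)\}\cup\{x\neq a: a\in P(A)\}$ is nonprincipal over $A$: a consistent formula over $A$ implying $\Sigma$ would be satisfied by some $a\in P(A)$, contradicting $x\neq a$. So by the omitting types theorem for the countable complete theory $Th(\bar M,a)_{a\in A}$, there is a countable $M\prec\bar M$ containing $A$ that omits $\Sigma$, i.e.\ $P(M)=P(A)$.

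You instead run the construction by hand inside $\bar M$. At each step you commit to the full type of an actual element $c_n$ over $A_n$. In a Henkin proof of omitting types one commits only to finitely many formulas about new constants, and completeness of $A$ alone suffices after quantifying the constants out. Because you commit to full types, you need the stronger invariant that every $A_n$ is complete. Your one-step claim supplies it: over a countable complete $B$, every consistent formula lies in a good type in the sense of Definition 2.4. You prove this with the same diagonal witnessing device as the Claim in the proof of Proposition 2.8.

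The paper's route buys brevity and reliance on a standard theorem, with the tacit step that a countable model of $Th(\bar M,a)_{a\in A}$ embeds in $\bar M$ over $A$. Yours is self-contained and works directly in $\bar M$. It also gives extra information: $M$ is the union of a chain of complete sets, each obtained from the previous one by realizing a good type.

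One minor point: your preliminary reduction to $\phi\vdash\bigwedge_i\neg P(y_i)$ is harmless but redundant. Applying the witnessing property to $\psi(x,y):=(x=y_i)$ already forces any $P$-coordinate of $c$ into $P(B)$. This is exactly why, as you observe, $P(B\cup c)=P(B)$ comes out automatically.
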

\begin{proof} Let $\Sigma(x) = \{P(x)\}\cup \{x\neq a: a\in P(A)\}$.  Then $\Sigma$ is a nonprincipal partial type over $A$, so is omitted in some countable model $M\prec {\bar M}$ containing $A$. 
\end{proof}

\begin{Lemma} Any countable model $N$ of $T^{P}$ is equal to $M^{P}$ for some (countable) model of $T$. 
\end{Lemma}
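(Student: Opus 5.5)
The plan is to reduce to Lemma 1.6 by realizing $N$ inside $\bar M^{P}$ as an elementary substructure, then using Remark 1.5 to get completeness.

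First, note that $\bar M^{P}$ is a model of $T^{P}$. Since $\bar M$ is saturated, $\bar M^{P}$ is also saturated, or at least $\omega_1$-saturated. Indeed, a type over countably many parameters in $P(\bar M)$ in the sense of $\bar M^{P}$ is, by quantifier elimination (the uniform reduction noted above), equivalent to a type of $\bar M$ in a variable restricted to $P$. Such a type is realized in $\bar M$, hence in $P(\bar M)$. Since $N$ is a countable model of the complete theory $T^{P}$, it elementarily embeds into the $\omega_1$-saturated (hence $\omega$-universal, as it is $\aleph_1$-saturated) model $\bar M^{P}$. So we may identify $N$ with an elementary substructure of $\bar M^{P}$. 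The one point to check is that any countable model of $T^P$ embeds elementarily into an $\aleph_1$-saturated model of $T^P$. This is the standard back-and-forth (forth only) argument, enumerating $N$ and realizing at each step the type of the next element over the finitely many already-chosen ones.

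Second, by Remark 1.5, $N$ (as a subset of $\bar M$) is complete, and it is countable. Lemma 1.6 then gives $M\prec \bar M$ with $N\subseteq M$ and $P(M)=P(N)=N$. Hence $M^{P}$ has universe $N$. Because $L$ is relational, $M^{P}$ is the substructure of $M$ (equivalently of $\bar M$) on $N$. The structure $N$ itself, as an elementary substructure of $\bar M^{P}$, is also the induced substructure on its universe. So $M^{P}=N$, and $M$ is a model of $T$; it can be taken countable by Lemma 1.6's proof. Finally, transport back along the embedding isomorphism to get the statement for the original $N$.

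The main (mild) obstacle is the first step: justifying that $N$ sits elementarily inside $\bar M^{P}$. This requires saturation of $\bar M^{P}$ as an $L$-structure, which uses QE of $T$ to translate types of $\bar M^{P}$ into partial types of $\bar M$ concentrating on $P$.
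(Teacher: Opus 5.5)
Your proof is correct and is the paper's argument (Remark 1.5 plus Lemma 1.6). You additionally make explicit the step the paper leaves to its standing convention that $N$ sits as an elementary substructure of $\bar M^{P}$; for that step, simply relativizing formulas to $P$ (rather than invoking QE) already turns types of $\bar M^{P}$ into partial types of $\bar M$ concentrating on $P$, and your forth argument only needs $\aleph_0$-saturation.
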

\begin{proof} By Remark 1.5 and Lemma 1.6.
\end{proof}

For the record, we include the following,  which deals with the case when the underlying theory $T$ is stable (and also countable).
\begin{Lemma} Suppose $T$ is stable. Then   for any complete set $A$ there is $M\models T$ containing $A$ with $P(M) = P(A)$. In particular $T$ has the Gaifman property. 
\end{Lemma}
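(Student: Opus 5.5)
We build $M$ by transfinite induction, using the fact that in a stable countable theory there is enough definability to find isolated types that do not realize $P$ over the current set. The main idea: for a complete set $B$, every formula $\phi(x)$ over $B$ that is consistent has a realization whose type over $B$ is in some sense ``$P$-isolated'' so that adding it keeps completeness and adds no new $P$-points.

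First, a reduction. It suffices to show that if $B$ is complete and $\phi(x)$ is an $L(B)$-formula with $\models\exists x\,\phi(x)$, then there is $c\models\phi$ such that $B\cup\{c\}$ is complete and $P(Bc)=P(B)$. Given this, we enumerate formulas and iterate transfinitely, taking unions at limits; completeness is preserved under unions of chains because formulas have finitely many parameters. The union is then closed under witnesses, so by Tarski–Vaught it is a model $M$ with $P(M)=P(A)$. For the Gaifman property, take $N\models T^P$. By compactness we may take $N\prec\bar M^P$ (the saturated $\bar M$ is large enough, and $\bar M^P$ is saturated as a model of $T^P$). By Remark 1.5, $N$ is complete, so we can apply the first part.

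The key step uses stability, specifically the stable embeddedness of $P$ and the definability of types. Let $B$ be complete and $\phi(x)$ consistent. Suppose first that $\phi(x)$ implies $P(x)$. Then completeness gives a witness in $P(B)$. Otherwise we may assume $\phi(x)\to\neg P(x)$, since a witness in $P$ is handled by completeness. We look for $c$ such that $\mathrm{tp}(c/B)$ is ``isolated relative to $P$''. By stability, for each $c$ and each formula $\psi(x,y,z)$, the set $\{(p,b): p\in P, \models\psi(c,p,b)\}$ is defined by a formula $d\psi(y,e)$ with $e$ from some parameters. We want $c$ such that the traces on $P$ are definable over $B$. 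That makes all new $P$-witnesses for formulas over $Bc$ reduce to formulas over $B$, which completeness of $B$ witnesses inside $P(B)$. In particular no element of $P(Bc)\setminus P(B)$ appears, since $c\notin P$.

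To find such a $c$, I would argue with a local rank. Take a formula $\phi'$ inside $\phi$ minimizing a suitable rank, such as the $\Delta$-rank or $R(-,\psi,2)$ for finitely many $\psi$. Do this for each finite set of formulas, and then use countability and compactness to realize a type over $B$ all of whose $\psi$-restrictions, to formulas mentioning a $P$-variable, are determined by formulas over $B$. The existence of a type over $B$ containing $\phi$ that is ``definable over $B$ relative to $P$'' is the main obstacle. One route is to take a nonforking extension of a type over a model containing $B$, but $B$ is not a model. Instead I would use that stable local ranks give, for each $\psi$, a $\psi$-type over $B\cup P(\bar M)$ of maximal rank consistent with $\phi$, with definition over $B$ together with finitely many parameters. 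Then I would use completeness of $B$ and stable embeddedness, which says the definition restricted to $P$ is over $P(B)\cup B$, to show the definitions lie over $B$.

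Finally, we verify completeness of $Bc$. Let $\chi(y,c)$ be over $Bc$ with a solution in $P$. Its trace on $P$ is defined by $d\chi(y)$ over $B$, which is satisfiable in $P$. Completeness of $B$ then gives a solution in $P(B)$, and this is a solution of $\chi(y,c)$.
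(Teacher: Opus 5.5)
Your outer construction is sound. Adding one element at a time while keeping the set complete with unchanged $P$-part, taking unions at limits, and finishing with Tarski--Vaught gives $M\prec\bar M$ with $P(M)=P(A)$. Your last paragraph also correctly shows that if every $L(Bc)$-formula $\chi(y,c)$ has $B$-definable trace on $P(\bar M)$, then $Bc$ is complete.

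The gap is the step you yourself call the main obstacle: finding $c\models\phi$ with all such traces $B$-definable. Because $P$ is stably embedded, this property is actually equivalent to completeness of $Bc$. (Apply completeness of $Bc$ to the formula ``$w\in P$ and $\theta(P,w)=\chi(P,c)$'', where $\theta(y,w)$ is a uniform defining scheme.) So it is the entire content of the lemma, and it is not proved.

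The route you sketch does not supply it. Stable embeddedness only yields defining parameters in $P(\bar M)$, not in $P(B)$, so using it to place the definitions over $P(B)\cup B$ assumes what is to be shown. A maximal-rank $\psi$-type over $B\cup P(\bar M)$ is defined from a formula in it whose parameters may lie in $P(\bar M)$, and nothing forces them into $B$. Also, rank-minimizing choices for different finite sets of formulas must be nested, not made separately and glued by compactness.

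The missing idea is to use \emph{minimal} rank together with completeness of $B$:
\begin{itemize}
\item Enumerate the $L$-formulas $\psi_k(x;y,z)$ and put $\theta_0=\phi$. Given $\theta_k$, choose a consistent $L(B)$-formula $\theta_{k+1}\to\theta_k$ with $n=R(\theta_{k+1},\psi_k,2)$ minimal (finite by stability).
\item For $b\in B$, the $L(B)$-formula $\exists x(\theta_{k+1}(x)\wedge\psi_k(x,y,b))\wedge\exists x(\theta_{k+1}(x)\wedge\neg\psi_k(x,y,b))$ has no solution in $P(B)$. Otherwise both halves would be consistent $L(B)$-formulas implying $\theta_k$, hence of rank $\geq n$, giving $R(\theta_{k+1},\psi_k,2)\geq n+1$.
\item By completeness of $B$, that formula has no solution in $P(\bar M)$ either. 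So $\theta_{k+1}$ decides every $\psi_k(x,e,b)$ with $e\in P(\bar M)$, $b\in B$.
\item Hence for $c$ realizing $\{\theta_k:k<\omega\}$, the trace of $\psi_k(c,y,b)$ on $P(\bar M)$ is defined by the $B$-formula $\forall x(\theta_{k+1}(x)\to\psi_k(x,y,b))$.
\end{itemize}
With this lemma your proof is complete.

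The paper takes a different path. It quotes the existence of locally atomic models over arbitrary sets for countable stable $T$. It then rules out $b\in P(M)\setminus A$ by applying local isolation to the formula $x\neq y$, followed by completeness of $A$. Your repaired argument is essentially a hands-on construction of such a model, with completeness used to decide instances over all of $P(\bar M)$ rather than only over $B$.
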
 
\begin{proof} The point is that  under the stability (and countability)  assumptions there are locally atomic models over all sets, where $M$ is said to be locally atomic over $A$ if $A\subseteq M$ and for all tuples $b$ from $M$, $tp(b/A)$ is locally isolated, namely for  each $L$-formula $\phi(x,y)$ there is  $\psi(x)\in tp(b/A)$ such that $\models \psi(x) \to \phi(x,a)$ whenever $a\in A$ and $\models \phi(b,a)$.
This is Exercise 8.50 in \cite{Pillay-intro} where the reference given  is to Lascar's doctoral thesis.  It should also appear in Chapter IV of \cite{Shelah-classification}, where these locally isolated types are called ${\bf F}^{\ell}_{\aleph_{0}}$-types.

So let $A$ be complete and $M$ locally atomic over $A$. Assume for a contradiction that there is $b\in P(M)$, $b\notin A$. So $tp(b/A)$ satisfies $P(x) \cup \{x\neq a\}:a\in P(A)\}$. Take $\phi(x,y)$ to be $x\neq y$ , so by the local isolation of $tp(b/A)$ there is $\psi(x)\in tp(b/A)$ such that $\models \psi(x) \to x\neq a$ for all $a\in P(A)$.  But $\models\exists x(P(x)\wedge \psi(x))$. By completeness of $A$ there is $a\in P(A)$ such that $\models \psi(a)$, a contradiction. 
\end{proof}

The following is Proposition 14 of \cite{Pillay-omega-categoricity},  Proposition 3.3 of \cite{HHM}, and also Lemma 12.5.3 of \cite{Hodges-book}. 
\begin{Lemma} $T$ is  $(\omega, \omega)$-categorical if and only for every $M\models T$, $M$ is atomic over $P(M)$ (i.e. for every finite tuple $a$ from $M$, $tp_{M}(a/P(M))$ is isolated). 
\end{Lemma}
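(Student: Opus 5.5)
We prove both directions separately. Throughout, "isolated" means isolated by a formula over $P(M)$, and we use that $L$ is countable, so that the isolating formulas are first-order and the standard back-and-forth goes through.

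For the right-to-left direction, let $M_1,M_2$ be countable models with countable $P$-parts, and let $f: M_1^P\to M_2^P$ be an isomorphism. Since $T$ has quantifier elimination, every formula is equivalent to a quantifier-free one. An isomorphism of the $L$-substructures $P(M_1)$ and $P(M_2)$ preserves quantifier-free formulas, so it preserves all $L$-formulas. Hence $f$ is a partial elementary map from $P(M_1)$ to $P(M_2)$.

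We then run a back-and-forth, extending $f$ by one element at a time. Suppose $g\supseteq f$ is elementary with finite domain $a\cup P(M_1)$. Given $b\in M_1$, the type $tp(ab/P(M_1))$ is isolated by hypothesis, say by $\phi(x,y,c)$. Then $\models\exists y\,\phi(g(a),y,g(c))$ in $\bar M$, and this is witnessed in $M_2$ because $M_2\prec\bar M$. The isolating formula is carried by $g$ to a formula isolating $g_*tp(ab/P(M_1))$, so any witness $b'\in M_2$ realizes the image type, and $g\cup\{(b,b')\}$ is elementary. The back step is symmetric, using atomicity of $M_2$ over $P(M_2)$. Countability lets us enumerate both models and take the union of the chain, which is an isomorphism lifting $f$.

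For the left-to-right direction, suppose some $M\models T$ and finite $a\in M$ have $tp(a/P(M))$ non-isolated. First we reduce to the countable case. Take a countable $M_0\prec M$ containing $a$ such that $tp(a/P(M_0))$ is non-isolated. This can be arranged by closing under Löwenheim–Skolem: at each stage we add, for each formula $\phi(x,c)\in tp(a/P(M_0))$ with $c$ from the current $P$-part, a witness in $P(M)$ to the non-isolation, namely some $d$ and $\psi(x,d)$ with both $\phi\wedge\psi$ and $\phi\wedge\neg\psi$ consistent. (Non-isolation over $P(M)$ provides such a witness.) So we may assume $M$ is countable with a non-isolated $p=tp(a/P(M))$.

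Next, by omitting types, we build a countable $N\succ$ with $P(N)=P(M)$ that omits $p$. Concretely, omit simultaneously the type $\{P(x)\}\cup\{x\neq e: e\in P(M)\}$ and $p(x)$ over the countable set $P(M)$. This works if $P(M)$ is complete: then the first type is non-isolated, exactly as in Lemma 1.6, and $p$ is non-isolated. The set $P(M)$ is complete by Remark 1.5, since $M^P\prec \bar M^P$. (This holds because $M\prec\bar M$ and, by QE, $P(M)$ with its induced structure is elementary in $P(\bar M)$ with its induced structure.) The omitting types theorem for countably many non-isolated types over a countable set gives a countable $N\prec\bar M$ containing $P(M)$ with $P(N)=P(M)$ and $N$ omitting $p$.

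Finally, the identity map $M^P=N^P$ cannot lift to an isomorphism $h:M\to N$. If it did, $h$ would fix $P(M)$ pointwise, and then $h(a)$ would realize $p$ in $N$, a contradiction. So $T$ is not $(\omega,\omega)$-categorical.

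The main obstacle is the Löwenheim–Skolem reduction in the left-to-right direction, namely preserving non-isolation when shrinking $P(M)$. The key point is that a single witness suffices per formula at each stage, which keeps the closure countable.
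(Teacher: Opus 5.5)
Your proposal is correct and takes essentially the same route as the paper. In both, omitting types over the complete countable set $P(M)$ handles the countable case, a L\"owenheim--Skolem argument handles arbitrary $M$, and the converse is the back-and-forth behind uniqueness of countable atomic models. One remark: your special witness-adding closure is unnecessary, so this step is not really the main obstacle. For \emph{any} countable $M_0\prec M$ containing $a$, if $\phi(x,c)$ with $c\in P(M_0)$ is split over $P(M)$ by some $\psi(x,m)$, then elementarity of $M_0$ in $M$ yields such an $m\in P(M_0)$. That is exactly the paper's argument, run in the contrapositive.
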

\begin{proof}  Assume the left hand side. Let $M$ be a countable model of $T$.  If $M$ is not atomic over $P(M)$, let $a$ be a finite tuple from $M$ such that $p = tp(a/P(M))$ is not isolated. By omitting types and the fact that $P(M)$ is complete we can find a countable model $M'$ such that $P(M') = P(M)$ and $M'$ omits $p$. So $M$ is not isomorphic to $M'$ over the $P$-parts, contradiction.

Now let $M$ be an arbitrary model of $T$, and $a$ a finite tuple from $M$. Let $M_{0}$ be a countable elementary substructure of  $M$ containing $a$. By what we have just seen, $tp(a/P(M_{0}))$ is isolated, by a formula $\phi(x)$ over $P(M_{0})$. It then follows that $\phi(x)$ isolates $tp(a/P(M))$. For otherwise for some $L$-formula $\psi(x,y)$ and parameters $m$ from $P(M)$, we have that both $\phi(x)\wedge \psi(x,m)$ and $\phi(x)\wedge\neg\psi(x,m)$ are consistent.
As $M_{0}\prec M$ we can find such $m\in P(M_{0})$ a contradiction.

Now assume the right hand side. Let $M_{1}$, $M_{2}$ be countable models of $T$ (elementary substructures of $\bar M$) such that $P(M_{1}) = P(M_{2}) = A$, say. By quantifier elimination (of $T$), $Th(M_{1},a)_{a\in A} = Th(M_{2}, a)_{a\in A}$. But by assumption both $M_{1}$ and $M_{2}$ are atomic in the language $L_{A}$ ($L$ with constants for elements of $A$). Hence $M_{1}$ and $M_{2}$ are isomorphic over $A$. 

\end{proof} 

The following also appears in \cite{Pillay-omega-categoricity}. The conclusion is also know as ``stable embeddability" of $P$. 

\begin{Corollary} (Uniform definability of types over the $P$-part.) Assume $T$ is  $(\omega, \omega)$-categorical. Then, for each $L$-formula $\psi(x,y)$, there is an $L$-formula $\chi(y,z)$ such that for each model $M$ of $T$ and tuple $a\in M$ there is 
$c\in P(M)$ such that for all $b\in P(M)$, $\models \psi(a,b)$ iff $\models \chi(b,c)$. 
\end{Corollary}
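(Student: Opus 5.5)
The plan is to combine Lemma 1.9 with a compactness argument. Fix $\psi(x,y)$. For any $M\models T$ and finite tuple $a$ from $M$, Lemma 1.9 says $tp(a/P(M))$ is isolated, say by $\theta(x,c)$ with $\theta(x,z)$ an $L$-formula and $c$ a tuple from $P(M)$. Then for $b\in P(M)$, $\models\psi(a,b)$ iff $\models\forall x(\theta(x,c)\to\psi(x,b))$. Since the formula $\forall x(\theta(x,z)\to\psi(x,y))$ is equivalent in $T$ to a quantifier-free formula $\chi_\theta(y,z)$, and quantifier-free truth is absolute between $M$ and $\bar M$, we get a defining scheme for $\psi(a,y)$ on $P(M)$ of the form $\chi_\theta(y,c)$. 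The only remaining issue is \emph{uniformity}: the formula $\theta$ depends a priori on $M$ and $a$, and we need a single $\chi$.

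To get uniformity I would argue by compactness. Let $\Theta$ be the (countable) set of all pairs consisting of an $L$-formula $\theta(x,z)$ and the associated $\chi_\theta$. For each $\theta$ consider the $L$-sentence $\sigma_\theta$: ``$\exists z\,(P(z)\wedge \forall y(P(y)\to(\psi(x,y)\leftrightarrow\chi_\theta(y,z))))$'', viewed as a formula $\sigma_\theta(x)$. Note this holds at $a$ in $M$ iff $\psi(a,y)$ is defined on $P(M)$ by some instance $\chi_\theta(y,c)$ with $c\in P(M)$. Suppose no single $\chi$ works. Then the type $\{\neg\sigma_\theta(x):\theta\}$ is finitely satisfiable in models of $T$. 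Actually we should first improve this by taking Boolean combinations: given finitely many $\theta_1,\dots,\theta_n$, one can code them into a single formula (using extra parameter variables from $P$ as a ``case selector'', since $P$ is infinite we can pick distinct elements to index the cases), so that if each model-tuple pair is handled by some $\theta_i$ then it is handled by one combined formula. Hence if for every finite set the disjunction of $\sigma_{\theta_i}$ failed somewhere, compactness gives $M\models T$ and $a$ realizing all $\neg\sigma_\theta$, contradicting the first paragraph. So some finite disjunction holds for all $(M,a)$, and the coding trick yields a single $\chi(y,z)$.

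The main obstacle is making sure that the compactness is applied in the right structure: $\sigma_\theta$ quantifies over $P$ in $M$, and since $M\prec\bar M$ and everything is first order in $L$, realizing the type $\{\neg\sigma_\theta(x)\}$ in $\bar M$ gives $a\in\bar M$, and then any model $M\ni a$ (e.g.\ $\bar M$ itself, or a countable elementary submodel containing $a$) satisfies $\neg\sigma_\theta(a)$ for all $\theta$, contradicting Lemma 1.9 applied to that $M$. The case-coding step is routine: with $z=(z_1,\dots,z_n,w_1,\dots,w_n,v)$ from $P$, let $\chi$ say ``$\bigvee_i (v=w_i\wedge\bigwedge_{j<i}v\neq w_j\wedge\chi_{\theta_i}(y,z_i))$''.
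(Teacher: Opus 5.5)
Your proof is correct and follows the paper's route. Lemma 1.9 gives an isolating formula $\theta(x,c)$ for $tp(a/P(M))$, which yields the defining formula $\forall x(\theta(x,z)\to\psi(x,y))$, and uniformity then comes from compactness. The paper compresses that uniformity step into ``this holds in every model''; your compactness argument with the case-selector coding (which uses $|P|\geq 2$) is exactly what fills it in, and your passage to a quantifier-free $\chi_\theta$ is harmless but unnecessary.
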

\begin{proof} First note that for any $M\models T$, and finite tuple $a$ from $M$, the isolation of $tp(a/P(M))$ implies its definability: Let $\phi(x)$ be a formula over $P(M)$ isolating $tp(a/P(M))$. Then for
any $L$-formula  $\psi(x,y)$ and $b\in P(M)$, $\models\psi(a,b)$ iff $\models (\forall x)(\phi(x) \to \psi(x,b))$.
Uniformity is because this holds in every model.
\end{proof} 

\begin{Remark} One can ask whether the Gaifman property follows from just stable embeddedness of $P$.   Itay Kaplan pointed out in a talk in Vienna (July 2025) a counterexample due to Hrushovski. 
Now when $T$ is stable, stable embeddability of $P$ is automatic, and in any case the existence of locally atomic models gives the Gaifman property.
In the same talk Kaplan  announced a result by  Bays, Simon, and himself, that if $T$ is  simple and $P$ is stably embedded then $T$ has the Gaifman property. 
\end{Remark}  

Let us mention an easy extension of Lemma 1.7 to the case where the model $N$ of $T^{P}$ has cardinality at most $\aleph_{1}$ under an $(\omega, \omega)$-categoricity assumption. First:

\begin{Lemma}  Suppose $T$ is  $(\omega, \omega)$-categorical. Let $M$ be a model of $T$, and let $N_{1}\prec N_{2}$ be models of $T^{P}$ such that $N_{1} = M^{P}$.
 Then $M\cup N_{2}$ is complete.
\end{Lemma}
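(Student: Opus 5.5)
The plan is to reduce completeness of $M\cup N_{2}$ to completeness of $N_{2}$ (Remark 1.5). The parameters coming from $M$ are eliminated using the fact that, by Lemma 1.9, their type over the $P$-part is isolated by a formula over $N_{1}\subseteq N_{2}$. Throughout I regard $N_{1}\prec N_{2}$ as elementary substructures of ${\bar M}^{P}$, so $N_{2}\prec {\bar M}^{P}$. Note also that $P(M\cup N_{2}) = N_{1}\cup N_{2} = N_{2}$.

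So let $\phi(x,m,n)$ be a formula with $m$ a tuple from $M$ and $n$ a tuple from $N_{2}$, and suppose $\models \exists x(P(x)\wedge\phi(x,m,n))$, witnessed by some $b^{*}\in P(\bar M)$. We must find such an $x$ in $N_{2}$.

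\emph{Step 1: isolation over all of $P(\bar M)$.} By Lemma 1.9, $tp(m/P(M))$ is isolated by some $\theta(y,c)$ with $c$ from $N_{1}=P(M)$. I claim $\theta(y,c)$ also isolates $tp(m/P(\bar M))$. The argument is the one in the second half of the proof of Lemma 1.9, now using $M\prec\bar M$.

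Suppose not. Then for some $L$-formula $\psi(y,w)$ the following sentence with parameter $c$ holds in $\bar M$:
\[ \exists w\Big(\bigwedge P(w_{i})\wedge\exists y(\theta(y,c)\wedge\psi(y,w))\wedge\exists y(\theta(y,c)\wedge\neg\psi(y,w))\Big). \]
Hence it holds in $M$. This gives $d\in P(M)$ such that $\theta(y,c)$ does not decide $\psi(y,d)$ in $M$, and therefore (as $M\prec\bar M$) not in $\bar M$ either. That contradicts isolation of $tp(m/P(M))$. I expect this transfer step to be the main point to get right; everything else is formal.

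\emph{Step 2: eliminating $m$.} Consider the formula
\[ \chi(x,c,n) := \forall y\big(\theta(y,c)\to\phi(x,y,n)\big), \]
whose parameters $c,n$ all lie in $N_{2}$. By Step 1, every $y$ satisfying $\theta(y,c)$ has the same type as $m$ over $P(\bar M)$, in particular over $b^{*}n$. Since $\phi(b^{*},m,n)$ holds, it follows that $\models\chi(b^{*},c,n)$, so $\models\exists x(P(x)\wedge\chi(x,c,n))$.

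\emph{Step 3: applying completeness of $N_{2}$.} By Remark 1.5, $N_{2}$ is complete. Hence there is $b\in N_{2}$ with $\models\chi(b,c,n)$. Since $m$ itself satisfies $\theta(y,c)$, we get $\models\phi(b,m,n)$ with $b\in P(M\cup N_{2})$, which is exactly what completeness of $M\cup N_{2}$ requires.
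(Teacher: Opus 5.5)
Your proof is correct and follows the paper's argument. You isolate $tp(m/P(M))$ by a formula over $N_{1}$ via Lemma 1.9, transfer the isolation to all of $P(\bar M)$, and replace $m$ by $\forall y(\theta(y,c)\to\phi(x,y,n))$, which has parameters in $N_{2}$ and so has a witness in $N_{2}$. The only differences are cosmetic: you cite Remark 1.5 for the last step where the paper repeats the QE argument inline, and you write out the transfer of isolation that the paper only cites as ``discussed earlier.''
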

\begin{proof} So everybody is living inside ${\bar M}$.  Suppose that $\phi(x,y,z)$ is an $L$-formula, $b\in M$, $c\in N_{2}$ and  ${\bar M}\models \exists x(P(x)\wedge \phi(x,b,c))$. 
By Lemma 1.8,  $tp(b/P(M))$ is isolated, by a formula $\chi(y,d)$ with $d\in P(M)$.  Now as discussed earlier $\chi(y,d)$ isolates $tp(b/P({\bar M}))$.
Let $e\in P({\bar M})$ be such that  ${\bar M}\models \phi(e,b,c)$. So  $ {\bar M}
\models (\forall y)(\chi(y,d) \to \phi(e,y, c))$.   Now consider the formula $(\forall y)(\chi(y,d)  \to \phi(x,y,c))$. By QE it is equivalent (in ${\bar M})$ to a quantifier-free formula 
$\pi(x,d,c)$. Now $d,c$ are parameters from $N_{2}$ and $\pi(x,d,c)$ is realized in ${\bar M}^{P}$, so as $N_{2}\prec {\bar M}^{P}$, it is realized by some $e'\in N_{2}$. 
So ${\bar M}\models \pi(e',d,c)$.  Hence ${\bar M}\models (\forall y)(\chi(y,d) \to \phi(e',y,c))$. So  ${\bar M}\models \phi(e',b,c)$ as required. 
\end{proof}

\begin{Proposition} Suppose $T$ is $(\omega, \omega)$-categorical and $N\models T^{P}$ has cardinality $\leq \aleph_{1}$. Then $N = M^{P}$ for $M$ a model of $T$. 
\end{Proposition}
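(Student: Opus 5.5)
The plan is to write $N$ as the union of a continuous elementary chain of countable models and to build the required model of $T$ in $\omega_1$ steps. If $N$ is countable we are done by Lemma 1.7, so assume $|N| = \aleph_1$. Embedding everything in ${\bar M}$, we may assume $N \prec {\bar M}^{P}$; this is possible since $T^{P}$ is complete and ${\bar M}^{P}$ is saturated. Write
\[
N = \bigcup_{\alpha<\omega_1} N_\alpha,
\]
where $(N_\alpha)_{\alpha<\omega_1}$ is a continuous elementary chain of countable elementary substructures of $N$. Each $N_\alpha$ is then also an elementary substructure of ${\bar M}^{P}$.

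By induction on $\alpha$ we build a continuous elementary chain $(M_\alpha)_{\alpha<\omega_1}$ of countable elementary substructures of ${\bar M}$ with $P(M_\alpha) = N_\alpha$ for every $\alpha$.
\begin{itemize}
\item \emph{Base step.} By Remark 1.5, $N_0$ is complete, so Lemma 1.6 gives a countable $M_0 \prec {\bar M}$ containing $N_0$ with $P(M_0) = N_0$.
\item \emph{Successor step.} Given $M_\alpha$ with $M_\alpha^{P} = N_\alpha \prec N_{\alpha+1}$, Lemma 1.11 (which uses $(\omega,\omega)$-categoricity) shows that $M_\alpha \cup N_{\alpha+1}$ is complete. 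This set is countable, so Lemma 1.6 gives a countable $M_{\alpha+1} \prec {\bar M}$ containing $M_\alpha \cup N_{\alpha+1}$ with $P(M_{\alpha+1}) = P(M_\alpha \cup N_{\alpha+1}) = N_{\alpha+1}$. Since $M_\alpha \subseteq M_{\alpha+1}$ and both are elementary in ${\bar M}$, we get $M_\alpha \prec M_{\alpha+1}$.
\item \emph{Limit step.} Put $M_\delta = \bigcup_{\alpha<\delta} M_\alpha$. This is elementary in ${\bar M}$ as a union of an elementary chain, and it is countable because $\delta$ is a countable ordinal. Moreover $P(M_\delta) = \bigcup_{\alpha<\delta} N_\alpha = N_\delta$ by continuity of the chain $(N_\alpha)$.
\end{itemize}
Finally set $M = \bigcup_{\alpha<\omega_1} M_\alpha$. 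Then $M \prec {\bar M}$ and $P(M) = N$. Since $T$ has QE in a relational language, $M^{P}$ is exactly the substructure $N$, which is what we want.

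The main obstacle is the successor step, and Lemma 1.11 is precisely what handles it: it needs the atomicity of $M_\alpha$ over its $P$-part (Lemma 1.8). A second point to check is that all the $M_\alpha$ stay countable, because Lemma 1.6 applies only to countable complete sets. This is also why the argument stops at $\aleph_1$. At $\aleph_2$ one would need the intermediate models to have size $\aleph_1$, and the extension step would then require a Lemma 1.6 for uncountable complete sets, which we do not have.
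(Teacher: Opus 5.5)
Your proof is correct and is essentially the paper's argument. You build a continuous $\omega_1$-chain of countable models $M_\alpha \prec {\bar M}$ with $P(M_\alpha)=N_\alpha$, using Lemma 1.7 (equivalently Remark 1.5 with Lemma 1.6) at the base, the completeness of $M_\alpha\cup N_{\alpha+1}$ (the lemma the paper cites as 1.11) together with Lemma 1.6 at successor stages, and unions at limits; you also correctly record $P(M_{\alpha+1})=N_{\alpha+1}$, where the paper's text has the slip $N_{\alpha+1}=M_\alpha^{P}$.
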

\begin{proof}
When $N$ is countable, this is Lemma 1.7 (which needs no assumption of $(\omega, \omega)$-categoricity).
So suppose $N$ has cardinality $\aleph_{1}$.  Again we work in ${\bar M}$. So $N\prec {\bar M}^{P}$.
Write $N$ as the union of a continuous elementary chain $(N_{\alpha}: \alpha < \aleph_{1})$ of countable models (of $T^{P}$). 
We will build a a continuous  elementary chain $(M_{\alpha}:\alpha < \aleph_{1})$ of countable models of $T$ (elementary substructures of ${\bar M}$) such that 
for each $\alpha$, $N_{\alpha} = M_{\alpha}^{P}$.  $M_{0}$ is given by Lemma 1.7.
Suppose we have found $M_{\alpha}$. Then by Lemma 1.11,  $M_{\alpha}\cup N_{\alpha + 1}$ is complete, so by Lemma 1.6,  we find our countable model $M_{\alpha+ 1}$ of $T$ containing $M_{\alpha}$ and with $N_{\alpha + 1} = M_{\alpha}^{P}$.   So we can build the $M_{\alpha}$.  Let $M = \cup_{\alpha}M_{\alpha}$.
\end{proof}

\begin{Remark}  (i) The reader can easily see that the conclusion of Lemma  1.11 just follows from stable embeddedness (definability of types over the $P$-part) by the same proof. As the proof of Proposition 1.13 just uses the conclusion of Lemma 1.12 it follows only from stable embeddedness  (also mentioned in Kaplan's Vienna talk). 

(ii) One is tempted to continue the construction above beyond $\aleph_{1}$, by for example trying to first show (using a similar union of chains argument) that if $M, N_{1}, N_{2}$ are as in Lemma 1.12, and of  cardinality $\aleph_{1}$ then there is $M'$ containing $M$ with $N_{2} = M'^{P}$. However this would require us to know the completeness of more complicated configurations, which are ``good systems" in the sense of \cite{S-U}. 
\end{Remark}

Recall that we call $(T,P)$ $1$-cardinal if for any model of $T$, $|M| = |P(M)|$,  which is well-known to be equivalent to there being no uncountable $M$ for which $P(M)$ is countable, and also equivalent to  there being no elementary pair $M_{1}\prec M_{2}$ of models with $M_{1}\neq M_{2}$ and $P(M_{1}) = P(M_{2})$
The following is also Proposition 3.2 of \cite{HHM} and Theorem 12.5.4 of \cite{Hodges-book}. 
\begin{Lemma} The following are equivalent:
\newline
(a) $T$ is relatively $\omega$-categorical.
\newline
(b) $T$ is $(\omega, \omega)$-categorical, and $(T,P)$ is $1$-cardinal.
\end{Lemma}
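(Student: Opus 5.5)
We prove each direction separately. For (a) $\Rightarrow$ (b), $(\omega,\omega)$-categoricity is immediate. If $M_1,M_2$ are countable with countable $P$-parts, then any isomorphism $M_1^P\cong M_2^P$ lifts by (a); this is exactly $(\omega,\omega)$-categoricity. So the real work is $1$-cardinality. Suppose it fails. Then, by the equivalence recalled just before the statement, there is an uncountable $M$ with $P(M)$ countable. By downward Löwenheim–Skolem we choose a countable $M_0\prec M$ with $P(M_0)=P(M)$ (first put $P(M)$ into $M_0$; then $P(M_0)=P(M)$ automatically). Now $M_0$ and $M$ are models whose $P$-parts coincide and have cardinality $\omega$. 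The identity map $M_0^P\to M^P$ is an isomorphism between $P$-parts of cardinality $\omega$, so by (a) it lifts to an isomorphism $M_0\cong M$. This is impossible, since $M_0$ is countable and $M$ is uncountable. Hence $(T,P)$ is $1$-cardinal.

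For (b) $\Rightarrow$ (a), let $M_1,M_2\models T$ with $|M_i^P|=\omega$ and $f:M_1^P\to M_2^P$ an isomorphism. By $1$-cardinality, $|M_i|=|P(M_i)|=\omega$, so both models are countable and $(\omega,\omega)$-categoricity applies directly to give a lifting of $f$.

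One point needs checking: $(\omega,\omega)$-categoricity as defined requires lifting an arbitrary isomorphism $f$, not just the identity. Both directions above use exactly the definition, so there is no gap. If instead one wants to go through Lemma 1.8, the argument is as follows. By quantifier elimination, $f$ is an elementary map between $P(M_1)$ and $P(M_2)$ inside $\bar M$. Each $M_i$ is atomic over $P(M_i)$ by Lemma 1.8, so a back-and-forth between the two countable atomic models over $f$ extends $f$ to an isomorphism, exactly as in the last paragraph of the proof of Lemma 1.8.

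The only mildly delicate step is the Löwenheim–Skolem choice in the first direction: we need $P(M_0)=P(M)$ exactly, and this holds because $P(M)$ is countable and is placed inside $M_0$. Apart from that the argument is routine.
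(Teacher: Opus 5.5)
Your proof is correct and is essentially the routine verification the paper leaves to the reader (``Immediate or left to the reader''). Both directions unwind the definitions, and the only extra input is the recalled equivalence of $1$-cardinality with there being no uncountable model with countable $P$-part, which you apply through a countable $M_0\prec M$ containing $P(M)$. The closing detour through Lemma 1.8 is harmless but unnecessary, because $(\omega,\omega)$-categoricity is already defined in terms of arbitrary isomorphisms of $P$-parts.
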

\begin{proof} Immediate or left to the reader. 
\end{proof}

It is convenient at this point to give quick proofs of the equivalences   stated by  Gaifman in \cite{Gaifman} and \cite{Gaifman-conjecture}, but in a slightly stronger form. 

\begin{Lemma} The following are equivalent:
\newline
(i) $T$ is $(\omega, \omega)$-categorical and every countable model $M$ of $T$ is ``rigid" over $P(M)$, namely $Aut(M/P(M))$ is trivial.
\newline
(ii) For every model $M$ of $T$, $M = dcl(P(M))$. 
\newline
(iii) $T$ is relatively categorical and for every $M\models T$, $M$ is rigid over $P(M)$. 
\end{Lemma}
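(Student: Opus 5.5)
I will prove (i)$\Rightarrow$(ii)$\Rightarrow$(iii)$\Rightarrow$(i).

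\emph{(i)$\Rightarrow$(ii).} Let $M\models T$ and let $a$ be a finite tuple from $M$. By Lemma 1.9, $tp(a/P(M))$ is isolated, say by $\phi(x)$ over $P(M)$. I claim $\phi(x)$ has a unique realization in $\bar M$, so that $a\in dcl(P(M))$. Suppose not. Choose a countable $M_0\prec M$ containing $a$ and the parameters of $\phi$. By elementarity, $\phi$ has a second realization $a'\neq a$ in $M_0$. Since $\phi$ also isolates $tp(a/P(M_0))$, we get $tp(a'/P(M_0))=tp(a/P(M_0))$. Now $M_0$ is countable and atomic over $P(M_0)$ in the language $L_{P(M_0)}$, as in the proof of Lemma 1.9. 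So the back-and-forth argument for atomic countable models gives an automorphism of $M_0$ fixing $P(M_0)$ and sending $a$ to $a'$. This contradicts rigidity. Hence $M=dcl(P(M))$.

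\emph{(ii)$\Rightarrow$(iii).} Rigidity is immediate, since an automorphism fixing $P(M)$ fixes $dcl(P(M))=M$. For relative categoricity, first obtain uniformity by compactness. Consider the partial type in $\bar M$ saying that $x$ is not $L$-definable from elements of $P$. If it were consistent, it would be realized in some model $M$ by an element outside $dcl(P(M))$. So there are finitely many $L$-formulas $\theta_i(x,y)$ such that in every model each element is the unique solution of some $\theta_i(x,b)$ with $b$ from $P(M)$. These can be coded into a single formula by disjunction with a case split.

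Now let $f:M_1^P\to M_2^P$ be an isomorphism. Every element of $M_1$ is named by a pair $(\theta,b)$ with $b\in P(M_1)$. Define $F$ by sending the element named by $(\theta,b)$ to the element named by $(\theta,f(b))$. Well-definedness, injectivity, preservation of relations, and surjectivity all reduce to statements about tuples from $P$. Each such statement is an $L$-formula evaluated at tuples from $P$, so by QE it is equivalent to a quantifier-free formula, and $f$ preserves quantifier-free formulas. This uses the uniform reduction property noted in the introduction. Hence $F$ is an isomorphism extending $f$.

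\emph{(iii)$\Rightarrow$(i).} Relative categoricity restricted to countable models gives $(\omega,\omega)$-categoricity. Rigidity of countable models is a special case of (iii).

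The main obstacle I expect is (i)$\Rightarrow$(ii): turning a nonunique realization of the isolating formula into a nontrivial automorphism. This requires checking carefully that countable atomicity over $P(M_0)$ yields homogeneity over $P(M_0)$ for $M_0$ itself. The compactness step for uniform definability is routine but needs care in the coding.
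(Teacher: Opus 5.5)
Your proof is correct and follows essentially the paper's route: for (i)$\Rightarrow$(ii) the paper also combines isolation of $tp(a/P(M))$ (Lemma 1.9), homogeneity of a countable model that is atomic over its $P$-part, and rigidity; it simply treats countable models first and then lifts by L\"owenheim--Skolem, and it leaves the other two implications as easy, which you fill in correctly. The compactness/uniformity step in your (ii)$\Rightarrow$(iii) is harmless but unnecessary: (ii) holds in every model, so any $L$-definition over $P(M_1)$ can be transferred via QE, and surjectivity follows from (ii) applied to $M_2$.
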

\begin{proof}   It is easy to see that (ii) implies (iii)  and (iii) implies (i). 
\newline
For (i) implies (ii) we first show that (ii) holds for countable models.  Let $M$ be countable, and suppose for a contradiction that $M\neq dcl(P(M))$.  So there is an element $a\in M$ not in $dcl(P(M))$. But $tp(a/P(M))$ is isolated, by  a formula $\phi(x)$ over $M$, which must then have another realization $b$ in $M$. As $M$ is atomic, so $\omega$-homogeneous, over $P(M)$, there is an automorphism $f$ of $M$ over $P(M)$ taking $a$ to $b$, contradicting rigidity. 
Now if $M$ is an arbitrary model of $T$, and $a\in M$, let $M_{0}$ be a countable elementary substructure of $M$ containing $a$. Then by what we just proved $a\in dcl(P(M_{0}))$ so $a\in dcl(P(M))$. 
\end{proof} 

We now recall internality. 
\begin{Definition} Let ${\bar M}$ be the saturated model of $T$. We say  that ${\bar M}$ is internal to $P$ if there is a small subset $A$ of ${\bar M}$ such that ${\bar M} = dcl(A,P({\bar M}))$.
Following \cite{Hrushovski} we also say that $T$ is an internal cover of $T^{P}$
\end{Definition} 

\begin{Remark}
(i) If  $T$ is an internal cover of $T^{P}$ then by compactness there is a finite tuple ${\bar a}$ from  ${\bar M}$ and $\emptyset$-definable partial function  $f(-,-)$ such that every element of ${\bar M}$ is of the form $f({\bar a}, {\bar d})$ for some tuple ${\bar d}$ from $P(M)$. 
\newline
(ii) It follows that for every model $M$ of $T$ there is a finite tuple $a$ from $M$ such that every element of $M$ is of the form $f(a,d)$ for some $d\in P(M))$. 
\end{Remark}

\begin{Lemma} The following are equivalent:
\newline
(i) $T$ is $(\omega, \omega)$-categorical, and every countable model $M$ of $T$ is ``weakly rigid" over $P(M)$, namely $|Aut(M/P(M))| < 2^{\omega}$.
\newline
(ii)  $T$ is $(\omega, \omega)$-categorical and is an internal cover of $T^{P}$.
\newline
(iii) $T$ is relatively categorical and every model $M$ of $T$ is ``weakly rigid" over $P(M)$, that is $|Aut(M/P(M))| < 2^{|M|}$. 
\end{Lemma}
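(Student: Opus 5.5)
I will prove (ii)$\Rightarrow$(iii)$\Rightarrow$(i)$\Rightarrow$(ii), modelled on the proof of Lemma 1.15.

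\emph{(ii)$\Rightarrow$(iii).} By Remark 1.17, fix an $\emptyset$-definable partial function $f$ such that every model $M$ contains a finite tuple $a$ with $M = \{f(a,d): d\in P(M)\}$. This gives weak rigidity at once: an automorphism of $M$ over $P(M)$ is determined by the image of $a$, so $|Aut(M/P(M))|\le |M|^{<\omega} = |M| < 2^{|M|}$.

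For relative categoricity, let $g: M_1^P\to M_2^P$ be an isomorphism. By QE, $g$ is an elementary map in the sense of $\bar M$. Since $T$ is $(\omega,\omega)$-categorical, Lemma 1.9 says $tp(a/P(M_1))$ is isolated by some $\phi(x,e)$ with $e\in P(M_1)$. Take $a'\in M_2$ realizing $\phi(x,g(e))$; such $a'$ exists in $M_2$ since $M_2 \prec \bar M$. By the argument of Lemma 1.9, $\phi(x,e)$ isolates $tp(a/P(\bar M))$, and likewise $\phi(x,g(e))$ isolates $tp(a'/P(\bar M))$. Hence $g\cup\{(a,a')\}$ is elementary, and it extends to $F(f(a,d)) = f(a',g(d))$. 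This $F$ is well defined and injective, and it preserves all relations, because $(a,d)$ and $(a',g(d))$ have the same type. It remains to see that $F$ is onto $M_2$, i.e.\ that $a'$ also generates $M_2$ via $f$. The generation property ``every element of $M$ equals $f(x,d)$ for some $d\in P(M)$'' is not first order in $a$ alone. This is the main obstacle. My first attempt is to run the argument symmetrically: start with the generating tuple $b$ of $M_2$ and obtain $b'\in M_1$. Alternatively, pick $a'$ as the image under a countable back-and-forth, using Lemma 1.9 atomicity over $P$ in countable elementary submodels. I expect the cleanest route is this: once $F: M_1\to M_2$ is an elementary embedding over $g$, its image is an elementary submodel $M'\prec M_2$ with $P(M') = P(M_2)$. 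Then show $M'=M_2$ by internality: every $c\in M_2$ is $f(b,d)$, and $b\in dcl(a',P(M_2))$ via the elementarity of $F$ applied to the corresponding element of $M_1$. More cautiously, use that internal covers are $1$-cardinal, i.e.\ there is no proper elementary extension with the same $P$-part. That holds because $M\subseteq dcl(a,P(M))$ and $a\in M'$ would give $M_2\subseteq M'$ if $a'$ generates $M_2$. So the real work is choosing $a'$ to generate $M_2$. I will do this by noting that, by Lemma 1.9 applied in a model containing both, ``$a$ generates'' is implied by the isolating formula, together with $\phi$ and uniform definability.

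\emph{(iii)$\Rightarrow$(i).} This is immediate from Lemma 1.14, since relative categoricity gives $(\omega,\omega)$-categoricity, and weak rigidity at $|M|=\omega$ is exactly the condition in (i).

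\emph{(i)$\Rightarrow$(ii).} Suppose $T$ is not an internal cover. I will build a countable $M$ with $2^\omega$ automorphisms over $P(M)$. Non-internality means that for every finite $a$, $M\not\subseteq dcl(a,P(M))$. Work in a countable model $M$, which is atomic, hence $\omega$-homogeneous, over $P(M)$ by Lemma 1.9. Build a binary tree of partial elementary maps over $P(M)$ between finite tuples. At each stage, given a finite tuple $\bar a$ with partial map $\bar a\mapsto\bar a'$ fixing $P(M)$, there is an element $c\notin dcl(\bar a,P(M))$. Since $tp(c/\bar a P(M))$ is isolated (atomicity over $P(M)$ applied to $\bar a c$), it has two distinct realizations, so the map can be split in two ways. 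Interleaving back-and-forth steps with an enumeration of $M$ yields $2^\omega$ distinct automorphisms over $P(M)$, contradicting weak rigidity. Hence some $a$ in some countable model satisfies $M=dcl(a,P(M))$. By compactness this yields a uniform $f$, and transferring to $\bar M$ via the elementary-submodel argument of Lemma 1.15 gives (ii).
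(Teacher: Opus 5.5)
There is a genuine gap in (ii)$\Rightarrow$(iii): you never prove that $F$ is onto $M_2$. The reason you give for why this is hard is mistaken. The generation property \emph{is} first order in $a$. If $\delta(x,\bar z,w)$ is the $L$-formula defining $f$, the property is the formula
\[
\theta(x):=\forall w\,\exists\bar z\,\Bigl(\bigwedge_i P(z_i)\wedge\delta(x,\bar z,w)\Bigr),
\]
and Remark 1.18(ii) says precisely that every model of $T$ contains a tuple satisfying it. None of your workarounds closes the gap. Elementary embeddings in both directions do not by themselves give an isomorphism. The ``internal covers are $1$-cardinal'' route is circular as you state it, since you reduce it back to $a'$ generating $M_2$. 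The final sentence is not an argument.

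The missing step is one line. You have shown that $g\cup\{(a,a')\}$ is elementary, so $tp(a')=tp(a)\ni\theta(x)$. Since $a\in M_1\prec\bar M$, we get $\bar M\models\theta(a)$, hence $\bar M\models\theta(a')$. Since $a'\in M_2\prec\bar M$, it follows that $M_2\models\theta(a')$. Thus
\[
M_2=\{f(a',d'):d'\in P(M_2)\}=\{f(a',g(d)):d\in P(M_1)\},
\]
which is the range of $F$. This is exactly what the paper relies on when it asserts that every element of $M_2$ has the form $f(b,d)$; the transfer is spelled out in the proof of Lemma 2.2, where the generation property is checked in $\bar M$ and hence holds in $M_2$. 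The same observation gives the $1$-cardinality you wanted. Apart from this, your (ii)$\Rightarrow$(iii) is the paper's argument. Working with an arbitrary isomorphism $g$ rather than identifying the $P$-parts is harmless, by QE.

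Your (iii)$\Rightarrow$(i) is fine. Your (i)$\Rightarrow$(ii) follows the paper. The splitting-tree construction is a hands-on proof of the ``well-known'' fact the paper quotes (weak rigidity yields a finite $a$ with trivial pointwise stabilizer in $Aut(M/P(M))$), merged with the $\omega$-homogeneity step. Two small corrections to the final transfer are needed.
\begin{enumerate}
\item Conclude that \emph{every} countable model $M$ has a finite $a$ with $M=dcl(a,P(M))$, which your argument does give, not just ``some'' countable model. The downward L\"owenheim--Skolem transfer needs the countable case for an arbitrary countable $M_0\prec\bar M$: if each finite $a\in\bar M$ had a witness $c_a\notin dcl(a,P(\bar M))$, a countable $M_0\prec\bar M$ closed under $a\mapsto c_a$ would contradict it.
\item Apply compactness only after this transfer, in $\bar M$, to obtain the uniform $f$. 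A single countable model with $M=dcl(a,P(M))$ does not by itself yield uniformity.
\end{enumerate}
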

\begin{proof} (iii) implies (i) is clear. 
\newline
(i) implies (ii).  We first note that for any countable model $M$ of $T$, there is a finite tuple $a$ from $M$ such that $M = dcl(P(M), a)$.  This is because as is well-known it follows from $Aut(M/P(M))$ having cardinality $< 2^{\omega}$ that there is a finite tuple $a$ from $M$ such that  if $\sigma\in Aut(M/P(M))$ fixes $a$ then $\sigma$ is the identity. But then by $\omega$-homogeneity (over $P(M))$ of $M$, it follows that $M= dcl(P(M),a)$.
Now a downward Lowenheim-Skolem argument implies that  for any model $M$ of $T$,  $M= dcl(a,P(M))$ for some tuple from $M$. So we have internality to $P$. 
\newline
(ii) implies (iii).  First given $M\models T$ and $a$, $f(-,-)$ as in Remark 1.18, any $\sigma\in Aut(M/P(M))$ is determined by $\sigma(a)$, so $Aut(M/P(M))$ has  cardinality at most $|M|$ so we get weak rigidity.
\newline
Now for relative categoricity: Let $M_{1}, M_{2}$ be models of $T$ with $M_{1}^{P} = M_{2}^{P} = N$ say. 
Let $a$ be a finite tuple from $M_{1}$ as in Remark 1.18 (ii). By Lemma 1.9, Let $tp(a/N)$ be isolated by $\phi(x)$. Then As $M_{1}$ and $M_{2}$ are elementarily equivalent over $P(M_{1} = P(M_{2})$, let $b\in M_{2}$ realize $\phi(x)$.
So $tp(a/N) = tp(b/N)$ and every element of $M_{2}$ is of the form $f(b,d)$ for some $d\in N$. Then $M_{1}$ is isomorphic to $M_{2}$ over $N$ by taking $a$ to $b$ and $f(a,d)$ to $f(b,d)$ for all $d\in N$.

\end{proof}

There are several equivalent characterizations of $(T,P)$ being $1$-cardinal.  In my thesis I made use of a certain rank introduced by R. Deissler in his 1974 Ph.D. thesis (University of Freiburg).  Another is by Erimbetov in a 1974 paper in Algebra and Logic.  But it is convenient to use a nice characterization in \cite{Herwig-Hrushovski-Macpherson} in terms of ``co-analyzability". We quote their definition (although it could be simplified).
\begin{Definition} We fix our theory $T$ and predicate $P$.  And work in the saturated model ${\bar M}$. Let $Q$ be any definable (with parameters) set. 
We say that $Q$is $0$-coanalyzable in $P$ (or co-analyzable in $P$ in $0$ steps) if $Q$ is finite.
And for $k\geq 0$, $Q$ is $k+1$-co-analyzable in $P$ (or co-analyzable in $P$ in $k+1$-steps)  if there is a definable (with parameters) relation $R\subseteq  Q\times P({\bar M})^{\ell}$ for some $\ell$ such that projection of $R$ to $Q$ is surjective and for any $\ell$-tuple $c$ from $P$,  $\{x\in Q: R(x,c)\}$ is $k$-co-analyzable in $P$.
\end{Definition} 

Note that if $Q$ is $k$-co-analyzable in $P$ then this happens uniformly in the obvious sense, by saturation of ${\bar M}$.

From Proposition 2.4 of \cite{Herwig-Hrushovski-Macpherson} we have:
\begin{Fact} $(T,P)$ is $1$-cardinal iff $x=x$ is  $k$-co-analyzable in $P$ for some $k$. 
\end{Fact}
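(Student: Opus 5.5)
The plan is to prove the two directions separately. Throughout I will use the two characterizations of $1$-cardinality recalled just before the statement: $(T,P)$ is $1$-cardinal iff $|M|=|P(M)|$ for all $M\models T$, iff there is no elementary pair $M_{1}\prec M_{2}$ with $M_{1}\neq M_{2}$ and $P(M_{1})=P(M_{2})$.

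For the direction from right to left, I prove a uniform statement by induction on $k$. For every formula $\theta(x,w)$ such that every instance $\theta(x,e)$ is $k$-co-analyzable in $P$ (uniformly, via fixed formulas, as noted after Definition 1.20), and for every $M\models T$ and $e\in M$, we get $|\theta(M,e)|\leq |P(M)|$.

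1. For $k=0$ the set is finite, and compactness gives a uniform bound, so the claim holds.
2. For the step $k+1$, let $R(x,y,e')$ witness co-analyzability, with $e'\in M$ (we may choose the parameters in $M$ by elementarity). Every $a\in\theta(M,e)$ satisfies $R(a,c,e')$ for some $c\in P(\bar M)^{\ell}$. Since $M\prec\bar M$, such a $c$ can be taken in $P(M)^{\ell}$.
3. So $\theta(M,e)$ is the union over $c\in P(M)^{\ell}$ of the fibers $R(M,c,e')$. These fibers are uniformly $k$-co-analyzable, so each has size at most $|P(M)|$ by induction.
4. Hence $|\theta(M,e)|\leq |P(M)|^{\ell}\cdot|P(M)|=|P(M)|$, using that $P$ is infinite.

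Applying this to $x=x$ gives $|M|=|P(M)|$.

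For the direction from left to right, I argue contrapositively. Suppose $x=x$ is not $k$-co-analyzable for any $k$. I will build a proper elementary extension of a countable model with the same $P$-part, which contradicts $1$-cardinality. Call a definable set \emph{bad} if it is not $k$-co-analyzable for any $k$. The key splitting property is the following: if $Q$ is bad and $R\subseteq Q\times P^{\ell}$ is definable with surjective projection, then some fiber $R(\bar M,c)$ is bad.

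To see this, suppose instead that every fiber is co-analyzable, but with possibly unbounded numbers of steps. For each $k$, consider the set of $c$ whose fiber is $k$-co-analyzable via a given finite list of witnessing formulas. I then use saturation and compactness to show that finitely many such formula lists already cover $P^{\ell}$. That yields a uniform bound $k$ on the fibers, and then $Q$ is $(k+1)$-co-analyzable, a contradiction. This compactness step is where I expect the main obstacle. The condition ``the fiber over $c$ is $k$-co-analyzable'' is not obviously definable, or even type-definable, in $c$. If the direct compactness argument fails, the fallback is to replace bad sets by a rank: the least $k$ for which the set is $k$-co-analyzable, or $\infty$. One then checks that ``rank $\leq k$ via fixed formulas'' is a definable condition on parameters, and runs a K\"onig-style argument over the countably many formula lists.

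Granting the splitting property, fix a countable $M\prec\bar M$ and carry out a Henkin-style construction of a type in new variables over $M$. The construction starts from a bad formula that excludes the elements of $M$; arranging that such a formula exists needs the easy observation that a bad set minus a finite set of points is still bad, plus a slight refinement so that realizations fall outside $M$. At each stage we keep a bad formula over $M$ together with finitely many new elements. We handle each existential requirement of the form $\exists y(P(y)\wedge\psi(y,\ldots))$ by applying the splitting property to the relation given by $\psi$. This yields a bad fiber over some $c$, and by elementarity of $M$ we may take $c\in P(M)$. The standard Henkin requirements ensure that the resulting structure $N$ is a model with $M\prec N$. The $P$-requirements ensure that $P(N)=P(M)$, while the new element ensures $N\neq M$. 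This gives the forbidden Vaughtian pair, so $(T,P)$ is not $1$-cardinal, completing the proof.
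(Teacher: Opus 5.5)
The paper gives no argument for this Fact; it quotes Proposition 2.4 of \cite{Herwig-Hrushovski-Macpherson}, so your proposal has to stand on its own.

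\textbf{What is already fine.} Your right-to-left induction is correct. Your splitting lemma is also correct, and the worry you raise about it is unfounded. Fix a finite list $\Phi$ of witnessing formulas together with finite bounds. Then ``the fibre over $w$ is co-analyzable via $\Phi$'' is expressed by a first-order formula $\mathrm{CA}_\Phi(w)$: existentially quantify the witnessing parameters, universally quantify over the $P$-tuple, and recurse on the level. (You already rely on this when you move parameters into $M$ in the easy direction.) So the set of $c$ with non-bad fibre is a countable union of sets definable over the parameters, and saturation gives finitely many lists $\Phi$ covering $P^{\ell}$.

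\textbf{The gap.} It lies in the Henkin step, in the sentence ``by elementarity of $M$ we may take $c\in P(M)$.'' Elementarity transfers single formulas only. The set of $c\in P$ whose fibre is bad is the complement of $\bigcup_\Phi \mathrm{CA}_\Phi$, so it is only type-definable over the finitely many parameters. An arbitrary countable $M$ may omit it.

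Here is a counterexample to the step. Let $P$ carry disjoint infinite predicates $U_n$ ($n<\omega$). Let $\pi$ map the complement of $P$ onto $P$, with $|\pi^{-1}(c)|=n+1$ for $c\in U_n$ and $|\pi^{-1}(c)|\geq n+2$ for $c\in P\setminus(U_0\cup\dots\cup U_n)$.
\begin{itemize}
\item A point of $P$ lying outside every $U_n$ has an infinite pure fibre, and that fibre can be enlarged. So Vaughtian pairs exist, and $x=x$ is bad.
\item Yet the countable model $M$ with $P(M)=\bigcup_n U_n(M)$ has only finite, definably bounded fibres. Hence every $N\succ M$ with $P(N)=P(M)$ equals $M$.
\item Your construction over this $M$ therefore stalls immediately, at the requirement $\exists y(P(y)\wedge \pi(x_0)=y)$.
\end{itemize}

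\textbf{How to repair it.} ``Fix a countable $M$'' must be replaced by a deliberate choice of $M$. Build a countable $M=\bigcup_i M_i$ as follows. For every finite $e\subseteq M_i$ and every pair of formulas $\theta,\psi$, consider the partial type in $y$ saying that $P(y)$ holds and that $\theta(\bar x,e)\wedge\psi(y,\bar x,e)$ is not co-analyzable via any $\Phi$. Let $M_{i+1}$ realize each such type whenever it is consistent. There are only countably many such types at each stage, so $M$ stays countable. Over such an $M$ your construction goes through.

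\textbf{A smaller point.} No consistent formula over $M$ excludes all of $M$. Instead, start from $\neg P(x_0)$, which is bad because $P$ is $1$-co-analyzable and unions of co-analyzable sets are co-analyzable. Then interleave the requirements $x_0\neq m$ for $m\in M$. Introduce the remaining Henkin witnesses with $\neg P$, using that images of co-analyzable sets under definable maps are co-analyzable.
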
 

To compare co-analyzability to analyzability recall that $Q$ is $1$-analyzable in  $P$  (or internal to $P$) if for some small (in fact finite) set $A$ of parameters, $Q\subseteq dcl(P({\bar M}), A)$,  and $Q$ is $k+1$-analyzable in $P$ if there is a definable (with parameters) map $f:Q\to Q'$ such that $Q'$ is $k$-analyzable in $P$ and evetry fibre is $1$-analyzable in $P$. 

Recall that $Q$ is said to be ``almost internal to $P$" if for some finite set of parameters $A$, $Q\subseteq acl(P({\bar M}), A)$.  So this is like internality, except $dcl$ is weakened to $acl$. 

Of course we are interested in the case where $Q$ is the universe of ${\bar M}$, namely the solution set of $x=x$, in which case we will say for example that ${\bar M}$ is $k$-co-analyzable in $P$. 

\begin{Lemma} The following are equivalent:
\newline
(i) ${\bar M}$ is $1$-co-analyzable in $P$,
\newline
(ii) ${\bar M}$ is almost internal to $P$,
\newline
(iii)  There is an $L$-formula $\phi(x,y,z)$ and $k<\omega$ such that $T$ implies ``for all $y$ and all $z$ from $P$, there are at most $k$ x such that $\phi(x,y,z)$ holds", AND for any model $M$ of $T$ there is a tuple $a$ from $M$ such that  $M\models (\forall x)(\exists z)(z\in P \wedge \phi(x,a,z))$.

\end{Lemma}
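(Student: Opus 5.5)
I would prove the cycle (i) $\Rightarrow$ (iii) $\Rightarrow$ (ii) $\Rightarrow$ (i). All three say the same thing: each element of the monster model lies in a uniformly bounded finite set that is definable from a fixed tuple together with a tuple from $P$. Most of the work is compactness, used to make things uniform.

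\textbf{(i) $\Rightarrow$ (iii).} Unwinding Definition 1.20 with $Q$ the universe of ${\bar M}$, 1-co-analyzability gives a relation $R(x,z)$, definable over some finite tuple $a_{0}$, with $z$ ranging over $P^{\ell}$. Every $x$ lies in $R(-,c)$ for some $c\in P^{\ell}$, and every fibre $R(-,c)$ is finite. By saturation of ${\bar M}$ these fibres are bounded in size, say by $k$. Write $R(x,z)=\phi_{0}(x,a_{0},z)$ with $\phi_{0}$ an $L$-formula, and set
\[
\phi(x,y,z) := \phi_{0}(x,y,z)\wedge \exists^{\leq k}x'\,\phi_{0}(x',y,z).
\]
Then $T$ proves that for all $y$ and all $z$ from $P$ there are at most $k$ many $x$ with $\phi(x,y,z)$.

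Next, the sentence $\exists y\,\forall x\,\exists z\,(P(z)\wedge\phi(x,y,z))$ is true in ${\bar M}$, witnessed by $a_{0}$. Since $T$ is complete, it holds in every model $M$. So in any $M$ some $a\in M$ satisfies $\forall x\,\exists z\,(P(z)\wedge\phi(x,a,z))$, which is what (iii) asks for.

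\textbf{(iii) $\Rightarrow$ (ii).} Apply (iii) to $M={\bar M}$ to get a tuple $a$. Each $x$ satisfies $\phi(x,a,c)$ for some $c$ from $P$, and this formula has at most $k$ solutions. Hence $x\in acl(a,c)\subseteq acl(P({\bar M}),a)$, which is almost internality.

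\textbf{(ii) $\Rightarrow$ (i).} This is the step where compactness has to be used with care. Suppose ${\bar M}\subseteq acl(P({\bar M}),A)$ with $A$ finite. For each $x$ choose an algebraic formula $\theta(x,a,c)$ with $c$ from $P$. This gives a cover of the type
\[
\{x=x\}\cup\{\neg\exists z\,(P(z)\wedge \theta(x,a,z)\wedge \exists^{\leq n}x'\,\theta(x',a,z)) : \theta, n\},
\]
in the sense that this type is omitted in ${\bar M}$. Since ${\bar M}$ is saturated, finitely many pairs $(\theta_{1},n_{1}),\dots,(\theta_{m},n_{m})$ already suffice.

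It remains to combine these into a single relation. I would take tuples $z=(z_{1},\dots,z_{m})$ from $P$, padding the $z_{i}$ to a common length and using dummy coordinates, and define
\[
R(x,z) := \bigvee_{i}\bigl(\theta_{i}(x,a,z_{i})\wedge \exists^{\leq n_{i}}x'\,\theta_{i}(x',a,z_{i})\bigr).
\]
Each fibre of $R$ has at most $\sum_{i} n_{i}$ elements, and the projection of $R$ onto the first coordinate is everything. So $R$ witnesses 1-co-analyzability, with finite fibres being the base case of 0-co-analyzability.

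The main obstacle is in this last step: the union over $i$ must stay finite on each fibre. Including the $\exists^{\leq n_{i}}$ conjunct inside each disjunct is what guarantees this.
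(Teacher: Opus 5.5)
Your proof is correct and takes the same approach as the paper. The paper's proof just says (i)$\Leftrightarrow$(ii) follows from the definitions and the equivalence with (iii) is the uniformity that comes from saturation and compactness; your cycle writes those arguments out. The one place you add real content is (ii)$\Rightarrow$(i): you correctly note that a compactness argument is needed to get a single definable relation $R$, with the $\exists^{\leq n_i}$ conjuncts keeping the fibres finite, whereas the paper tucks this step into ``from the definitions''.
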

\begin{proof} The equivalence of (i) and (ii) is from the definitions.  The equivalence with (iii) is precisely the uniformity discussed above.
\end{proof}

As in Definition 1.17 we will say that $T$ is an almost internal cover of $T^{P}$  when the equivalence conditions in Lemma 1.22 hold.

\section{Almost internal covers and $\omega$-stability over a predicate}
We know by Lemma 1.15  and Fact 1.21 that if $T$ is relatively $\omega$-categorical then for some $k$, $x=x$ is $k$-co-analyzable in $P$.   By \cite{Hart-Shelah} there are relatively $\omega$-categorical theories which are not relatively  categorical. It would be interesting to look for  a strengthening of co-analyzability so as to obtain an equivalence with relative categoricity (modulo relative $\omega$-categoricity).  It would also be interesting to look at the examples from \cite{Hart-Shelah} through the lens of co-analyzability.  In the meantime we just point out that $1$-co-analyzability {\em is} a sufficient condition for relative categoricity and moreover the Gaifman conjecture holds in this case.  
As above we will say that $T$ is an almost internal cover of $T^{P}$ when $x=x$ is $1$-co-analyzable in $P$. 

So we will prove:
\begin{Proposition} Suppose $T$ is relatively categorical and ${\bar M}$ is almost internal to $P$. Then 
\newline
(i) $T$ is relatively categorical, and
\newline
(ii) $T$ has the Gaifman property.
\end{Proposition}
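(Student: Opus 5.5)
Note first that the hypothesis ``relatively categorical'' in the statement must be a slip, since (i) would otherwise be trivial; I will prove the proposition assuming only that $T$ is relatively $\omega$-categorical, hence by Lemma 1.15 $(\omega,\omega)$-categorical and $1$-cardinal. The plan uses three ingredients: Lemma 1.9 (every model is atomic over its $P$-part), Lemma 1.22(iii) (a uniform formula $\phi(x,y,z)$ with at most $k$ solutions in $x$ for fixed $y$ and $z\in P$), and a ``minimal algebraic formula'' argument to extend partial elementary maps along algebraic closures.

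For (i), let $M_{1},M_{2}\models T$ with $M_{1}^{P}=M_{2}^{P}=N$; it suffices to treat the case where the given isomorphism is the identity. By Lemma 1.22(iii) choose a tuple $a\in M_{1}$ with $M_{1}\subseteq acl(a,N)$. By Lemma 1.9, $tp(a/N)$ is isolated, say by $\theta(x)$ over $N$. Since $M_{2}\prec\bar M$, some $b\in M_{2}$ realizes $\theta$, so $a\mapsto b$ together with the identity on $N$ is elementary.

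I then extend this map by transfinite induction to an elementary embedding $f:M_{1}\to M_{2}$ over $N$. Suppose $f$ is already defined on $aNC$ with image in $M_{2}$, and let $c\in M_{1}$. Then $c$ is algebraic over $aN$. Choose a formula $\chi(x)$ over $aNC$ true of $c$ with the minimal finite number of solutions. This $\chi$ isolates $tp(c/aNC)$. The formula $f(\chi)$ has its parameters in $M_{2}$ and is consistent, so it is realized by some $c'\in M_{2}$. Any such $c'$ has the type $f(tp(c/aNC))$, so setting $f(c)=c'$ keeps $f$ elementary.

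The result is an elementary embedding $f:M_{1}\to M_{2}$ whose image $f(M_{1})\prec M_{2}$ has the same $P$-part $N$. By $1$-cardinality (see the remark before Lemma 1.15), there is no proper elementary extension with the same $P$-part, so $f(M_{1})=M_{2}$. This proves (i).

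For (ii), let $N\models T^{P}$ be arbitrary, and place it so that $N\prec\bar M^{P}$. Pick a countable $N_{0}\prec N$. By Lemma 1.7 there is a countable $M_{0}$ with $M_{0}^{P}=N_{0}$. The proof of Lemma 1.11 uses nothing about the size of $N_{2}$, so it shows that $B=M_{0}\cup N$ is complete. Put $M=acl(B)$. I need two facts about $M$.

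First, $P(M)=N$. Let $e\in P$ be algebraic over $B$, witnessed by a formula $\psi$ over $B$ with exactly $m$ solutions. By completeness, $\psi(x)\wedge P(x)$ has a solution in $N$. Applying completeness again to $\psi(x)\wedge P(x)\wedge x\neq e_{1}$, and so on, I find all $P$-solutions of $\psi$ inside $N$; in particular $e\in N$.

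Second, $M\prec\bar M$, which I check by Tarski--Vaught; this is the step I expect to be the main obstacle. Take a formula over $M$ with a solution in $\bar M$. Its parameters are algebraic over $B$, and solutions can be transported over $B$ since $acl$-conjugates are controlled, so it suffices to realize in $M$ a consistent formula $\rho(x)$ over $B$. By almost internality, any solution $x$ satisfies $\phi(x,a,e)$ for some $e\in P$, where $a\in M_{0}$ is as in Lemma 1.22. Hence $\exists x(\rho(x)\wedge\phi(x,a,z))$ holds of some $z\in P$. By completeness of $B$ such a $z$ can be taken in $N$. Then $\rho(x)\wedge\phi(x,a,z)$ has at most $k$ solutions, all algebraic over $B$, so it is realized in $M$.

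So $M\models T$ and $M^{P}=N$, which gives the Gaifman property.
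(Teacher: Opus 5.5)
Your proof is correct, and you are right that the hypothesis should read ``relatively $\omega$-categorical''. For (ii), though, you take a different route from the paper's official proof.

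The paper first proves Lemma 2.2: if $N_1\prec N_2$ are the $P$-parts of $M_1,M_2$, the inclusion lifts to an elementary embedding $M_1\to M_2$. This is done by realizing in $M_2$ the formula isolating $tp(a/N_1)$ and sending $acl(N_1,a)$ to $acl(N_1,a')$. The paper then builds $M$ with $M^P=N$ by induction on $|N|$, as the union of a continuous chain $(M_\alpha)$. It uses Lemma 2.2 at successor stages and part (i) at limit stages, to identify $\bigcup_{\alpha<\delta}M_\alpha$ with $M_\delta$.

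What you do instead is the explicit construction the paper only sketches in Remark 2.3: your $M=acl(M_0\cup N)$ is exactly $acl(N,a)$. The remark obtains $P(M)=N$ from definability of $tp(a/P(\bar M))$ over $N$ and leaves elementarity to the reader. You get both facts from completeness of $M_0\cup N$ (the lemma preceding Proposition 1.13), and your Tarski--Vaught argument is sound. The transport step you only sketch works as follows: take a minimal algebraic formula $\mu(y)$ over $B$ for the parameter $m$, realize $\exists y(\mu(y)\wedge\theta(x,y))$ in $acl(B)$, and move the witness back to $m$ by an automorphism over $B$, which preserves $acl(B)$ setwise.

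The trade-off between the two routes:
\begin{itemize}
\item Your route needs neither relative categoricity nor $1$-cardinality for (ii), and no induction on cardinality. It uses only $(\omega,\omega)$-categoricity, or even just stable embeddedness by Remark 1.14, plus almost internality.
\item The paper's route yields the lifting property of Lemma 2.2 along the way, which is the historically motivated point.
\end{itemize}

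For (i) you match the paper up to the last step. Rather than invoking $1$-cardinality, you could note that $b$, having the same type as $a$, also satisfies $\forall x\exists z(P(z)\wedge\phi(x,y,z))$. Hence $M_2=acl(N,b)\subseteq f(M_1)$ directly.
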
 

Actually (i) follows as in the proof of Lemma 1.19.   One can prove (ii) by an explicit construction which we discuss later. But it will be amusing to first prove it via showing that the unproved ``result" of Shelah that I appealed to in my thesis is true in this case. 

\begin{Lemma} Suppose $T$ is relatively  categorical and $T$ is an almost internal cover of $T^{P}$. Let $M_{1}, M_{2}$ be models of $T$, and let $N_{i} = M_{i}^{P}$ for $i=1,2$. Suppose $f:N_{1}\to N_{2}$ is an elementary embedding, then $f$ extends to an elementary embedding of $M_{1}$ in $M_{2}$. 
\end{Lemma}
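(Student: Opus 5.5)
We work inside $\bar M$, so $N_1\prec N_2' \prec \bar M^P$ after moving things around. The plan is to reduce to the situation where $f$ is an inclusion $N_1\prec N_2$ with $N_i=M_i^P$, and then use almost internality to pin $M_1$ down by a finite tuple whose type over $N_1$ is isolated.

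\textbf{Step 1: reduce to an inclusion.} By quantifier elimination, $f$ is an elementary map of $\bar M$ (as a partial map between subsets of $P(\bar M)$), so $f$ extends to an automorphism $\sigma$ of $\bar M$. Replacing $M_1$ by $\sigma(M_1)$, we may assume $N_1\subseteq N_2$, $N_1\prec N_2$ (as models of $T^P$), and $f$ is the identity. It now suffices to find an elementary embedding of $M_1$ into $M_2$ over $N_1$.

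\textbf{Step 2: choose the witnessing tuple.} Use Lemma 1.22(iii): fix $\phi(x,y,z)$ and $k$, and a tuple $a\in M_1$ such that every element of $M_1$ satisfies $\phi(x,a,d)$ for some $d\in N_1$. Relative categoricity gives relative $\omega$-categoricity, hence $(\omega,\omega)$-categoricity by Lemma 1.15. Lemma 1.9 then says $tp(a/N_1)$ is isolated, say by $\chi(y,e)$ with $e\in N_1$, and, as in the proof of Lemma 1.9, $\chi(y,e)$ also isolates $tp(a/P(\bar M))$. Since $M_2\prec\bar M$ contains $e$ and $\bar M\models\exists y\,\chi(y,e)$, we can pick $b\in M_2$ with $\chi(b,e)$. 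Then $tp(b/P(\bar M))=tp(a/P(\bar M))$, and in particular $tp(a/N_2)=tp(b/N_2)$.

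\textbf{Step 3: extend to $\mathrm{acl}$.} Let $\tau$ be an automorphism of $\bar M$ fixing $P(\bar M)$ pointwise with $\tau(a)=b$. Every $c\in M_1$ satisfies $\phi(x,a,d)$ for some $d\in N_1$, a set of size $\le k$. So $\tau(c)$ satisfies $\phi(x,b,d)$, and $\tau(c)\in\mathrm{acl}(b,d)$.

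\textbf{Main obstacle: $\tau(c)$ need not lie in $M_2$.} The algebraic set defined by $\phi(x,b,d)$ is defined over $M_2$, so all of its (at most $k$) elements lie in $M_2$. Hence $\tau(c)\in M_2$. This fixes the problem directly: $\tau\restriction M_1$ is elementary (being the restriction of an automorphism), fixes $N_1$, and maps $M_1$ into $M_2$. Composing with $\sigma$ gives an extension of $f$.

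The delicate point to double-check is that $\sigma$ preserves the relevant data: $\sigma(M_1)$ is again a model whose $P$-part is $f(N_1)\subseteq N_2$, and elementarity of $f(N_1)$ in $N_2$ is what we need. This holds because $f$ is elementary between the $P$-structures.
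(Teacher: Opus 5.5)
Your proof is correct and is essentially the paper's argument: you reduce to $N_1\prec N_2$, take $a\in M_1$ witnessing almost internality, isolate $tp(a/N_1)$ by $\chi(y,e)$ using $(\omega,\omega)$-categoricity, and realize $\chi(y,e)$ by some $b\in M_2$. You then send $M_1=\mathrm{acl}(N_1,a)$ elementarily into $\mathrm{acl}(N_1,b)\subseteq M_2$; the paper does this by extending the elementary map $N_1a\mapsto N_1b$ to algebraic closures, while you restrict an automorphism $\tau$ and use that $\phi(x,b,d)$ has at most $k$ solutions, all lying in $M_2$. One simplification: you only use that $\tau$ fixes $N_1$, so you can take $\tau\in\mathrm{Aut}(\bar M/N_1)$ directly by homogeneity over the small set $N_1$, which removes the need to justify an automorphism fixing the large set $P(\bar M)$ and the need for $\chi(y,e)$ to isolate $tp(a/P(\bar M))$.
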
 
\begin{proof} We may assume that $N_{1}\prec N_{2}$. We again let $\phi(x,y,z)$ be the $L$-formula as in (iii) of Lemma 1.22.  And let $a$ be a finite tuple from $M_{1}$ such that $M_{1}$ is the set of realizations of
$\phi(x,a,b)$ as $b$ ranges over tuples from $P(M_{1}) = N_{1}$.  Let $\chi(y,d)$ isolate $tp(a/P(M_{1}))$ with $d\in P(M_{1})$.  So easily in $M_{1}$ the following holds: for all $a'$ realizing $\chi(y,d)$, $M_{1}$ is the set of realizations of the $\phi(x,a',b)$ as $b$ ranges over tuples from $P(M_{1})$.  (First check it holds in the saturated model ${\bar M}$, as there will be an automorphism fixing $P({\bar M})$ pointwise and taking $a$ to $a'$.  
So it holds in $M_{2}$.)  Let $a'$ realize $\chi(x,d)$ in $M_{2}$.  So $tp(a'/P(M_{1}) = tp(a/P(M_{1}))$.  Consider the map from $M_{1}$ to $M_{2}$ which is the identity on $N_{1}$, takes $a$ to $a'$, and takes  $M_{1} = acl(N_{1},a)$ to $acl(N_{1},a')\subseteq N_{2}$.  This is the required  elementary embedding. 

\end{proof}

\noindent
We now  proceed as in my thesis. 
\newline
{\em Proof of Proposition 2.1 (ii).} 
 We prove by induction on the cardinality of $N\models T^{P}$ that $N = M^{P}$ for some model $M$ of $T$.
When $T$ is countable this is Lemma 1.7.
Now assume $\kappa$ to be uncountable and that every model of $T^{P}$ of cardinality $<\kappa$ is the $P$-part of a model of $T$.
Let $N$ be a model of $T^{P}$ of cardinality $\kappa$ and let $(N_{\alpha}: \alpha < \lambda)$ be a continuous chain of elementary submodels of $N$, each of cardinality $< \kappa$ and whose union is $N$.
By the inductive assumption, for each $\alpha < \kappa$, let $M_{\alpha}$ be a model of $T$ such that $N_{\alpha} = M_{\alpha}^{P}$, and notice it has the same cardinality as $N_{\alpha}$. 
By Lemma 2.2 we may assume that $M_{\alpha}\prec M_{\alpha + 1}$ for all $\alpha$.
Moreover for a limit ordinal $\delta < \lambda$ we may assume that $\cup_{\alpha<\delta}M_{\alpha} = M_{\delta}$,  because as these are both models of $T$ with the same $P$ parts they are isomorphic over the $P$-parts.
Put $M = \cup_{\alpha< \lambda}M_{\alpha}$ and $M$ is a model of $T$ with $M^{P} = N$.   \qed

\begin{Remark} The explicit construction of $M$ such that $N = M^{P}$ goes as follows:  Choose countable $N_{0}\prec N$. Let  by 1.7 $M_{0}$ be a model of $T$ such that $N_{0} = M_{0}^{P}$. And let $\phi(x,y,z)\in L$ be as in Lemma 1.22 and let tuple $a$  be in $M_{0}$ such that $M_{0}$ is the set of realizations of formulas $\phi(x,a,b)$ as $b$ ranges over $N_{0}$.  So also ${\bar M}$ is the set of realizations of $\phi(x, a, b)$ as $b$ ranges over ${\bar M}^{P}$. In particular ${\bar M} = acl(P({\bar M}),a)$.  Now show that $acl(N,a)$ is an elementary substructure of ${\bar M}$ whose $P$-part is exactly $N$.  For example to show the latter, we use the fact that $tp(a/P({\bar M}))$ is definable over $N$ to see that $acl(N,a)\cap P({\bar M}) = acl(N)\cap P({\bar M})$. As $N\prec {\bar M}^{P}$ this must be exactly $N$. 

The same construction will work only assuming that $P$ is stably embedded (and ${\bar M}$ is almost internal to $P$), yielding that these assumptions also  imply the Gaifman property. 

\end{Remark}

\vspace{5mm}
\noindent
We now pass on to stability.  Our general assumptions are as at the beginning of Section 1: $T$ a complete theory in a countable relational language $L$, $T$  has QE, and $P\in L$ a unary predicate.

\begin{Definition}
Let $A$ be a complete set (as in Definition 1.4) and let $p(x)$ be a complete type over $A$ ($x$ a finite tuple of variables). $p(x)$ is said to be good if for some/any realization $a$ of $p$, $(A,a)$ is complete.
$S_{*}(A)$ is the collection of good complete types over $A$.

\end{Definition}  

\begin{Remark} If $A$ is complete and $p(x)\in S(A)$ is isolated then  $p$ is good.
\end{Remark} 
\begin{proof} Suppose $p$ is isolated by the formula $\phi(x)$ over $A$.  Let $a$ realize $p$. Let $\psi(x,y)$ be a formula over $A$ such that $\models \exists y(P(y) \wedge (\psi(a,y))$. Then 
$\models \exists y(P(y) \wedge \exists x(\phi(x)\wedge \psi(x,y)))$.  By completeness of $A$ there is $b\in P(A)$ such that $ \exists x(\phi(x)\wedge \psi(x,b))$. 
As $\phi(x)$ isolates $tp(a/A)$ and $b\in A$ we have $\models \psi(a,b)$, as required. 
\end{proof}

The general thrust of ``stability over a predicate" is to count good types over complete sets, in the same way as stability can be introduced by counting types over  arbitrary sets. (However these sets $S_{*}(A)$ of good types are NOT compact spaces.)   In \cite{Pillay-Shelah} (and  later papers)   there was a blanket assumption of stable embeddabilty of $P$.  A complete set $A$ was called {\em stable}  (or stable over $P$) if  $|S_{*}(A)| \leq |A|^{\omega}$  (assuming $T$ countable).   The stability over $P$ over $A$ was shown to be equivalent to certain local ranks on types over $A$ being finite.  Some consequences of the instability over $P$ of some model of $T$ were given in  terms the existence of ``many" models of $T$ in suitable cardinalities with the same $P$-part, and under some set-theoretic hypotheses.  In \cite{Shelah-classification-II} (and \cite{AlZurba})  the stability of suitable ``systems" was assumed, in the presence of a $1$-cardinality assumption, to show the Gaifman property.  According to \cite{Usvyatsov}, the more recent work (by Usvyatsov, Shelah) was aimed at proving the Gaifman property,  under assumptions on the stability (over $P$) of suitable systems, but without assuming $1$-cardinality.

In \cite{Usvyatsov}, $T$  is called {\em fully stable} over $P$ if {\em every}  complete set is stable over $P$.   Under this assumption, so-called ''full existence" was proved, namely for every complete set $A$ there is a model $M$ of $T$ such that $P(M) = P(A)$, by building a ``locally atomic" model over $A$.    It was also pointed out that taking $T$ to be some completion of $ACFA_{0}$ and $P$ the fixed field, $T$ is fully stable over $P$ (although $T$ is far from being relatively categorical).

Bearing in mind the role of $\omega$-stability in the study of uncountable categoricity, one might expect that some version of $\omega$-stability over $P$ would play a role in relative categoricity.  So we introduce (full)  $\omega$-stability over $P$ and make some observations, although we see later some very basic examples of relatively categorical theories which are not fully $\omega$-stable over $P$.

\begin{Definition} We will say that $T$ is fully $\omega$-stable over $P$, or fully  relatively $\omega$-stable, if for every countable complete set $A$, $S_{*}(A)$ is countable. 
\end{Definition} 

\begin{Remark} (i)  We are using the expression ``fully" to be consistent with \cite{Usvyatsov}. Instead we might want to just posit that for suitable countable complete sets $A$, $S_{*}(A)$ is countable. 
\newline
(ii)  Full $\omega$-stability will imply that for every complete set $A$, $|S_{*}(A)| \leq |A|$ (although we will not prove it).  In particular it will imply that $T$ is fully stable over $P$ in the sense of \cite{Usvyatsov}. 
\newline
(iii)  Suppose $T$ is relatively $\omega$-categorical and $M$ is a countable model of $T$ (so complete), then $S_{*}(M)$ is precisely the set of realized types.
\newline
(iv) Suppose $T$ is  $(\omega, \omega)$-categorical, and $N$ is a model of $T^{P}$ so  complete by Remark 1.5.  Then $S_{*}(N)$ is countable. 
\end{Remark} 
\begin{proof}
(iii) Let $M\models T$ be countable,  $p(x)\in S_{*}(M)$ and $a$ realize $p$. Then  by Lemma 1.6, there is a countable  model $M'$ of $T$ containing $M,a$ with $P(M') = P(A) =  P(M)$.  As $M$ has no proper elementary extension with the same $P$-part (by $1$-cardinality), $M' = M$ and $p$ is realized in $M$.
\newline
(iv)  By Lemma 1.7, there is a countable model $M$ of $T$ such that $M^{P} = N$, and by $(\omega, \omega)$-categoricity $M$ is unique up to isomorphism over $N$.  If $tp(a/N)$ is good, then again by Lemma 1.6, $Na$ extends to a model $M_{a}$ of $T$ with $M_{a}^{P} = M^{P} = N$. But then $M_{a}$ is isomorphic to $M$ over $N$ so in particular $tp(a/N)$ is realized in $M$. So $S_{*}(N)$ is countable.

\end{proof}

The main thing is: 
\begin{Proposition} Suppose that $T$ is fully $\omega$-stable over $P$. Let $A$ be complete. Then the isolated types are dense in $S(A)$: for any consistent formula $\phi(x)$ over $A$, there is an isolated $p(x)\in S(A)$ containing $\phi(x)$.
\end{Proposition}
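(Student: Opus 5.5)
The plan is to argue by contradiction with a Cantor--Bendixson style binary tree, as in the classical proof that $\omega$-stability gives density of isolated types. The new point is that the branches must be \emph{good} types over a \emph{countable complete} set, so that full $\omega$-stability over $P$ (Definition 2.7) applies. So suppose $\phi(x)$ over $A$ is consistent and no isolated type in $S(A)$ contains $\phi$.

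Call a consistent formula $\theta(x)$ over $A$ \emph{bad} if it implies $\phi$. Every bad $\theta$ has no isolated extension, so it does not isolate a type over $A$. Hence there is a formula $\chi$ over $A$ with $\theta\wedge\chi$ and $\theta\wedge\neg\chi$ both consistent, and both are again bad. Also, for any formula $\psi(x,y)$ over $A$, a bad $\theta$ can be refined to a bad formula that ``decides $\psi$ with a witness in $P(A)$'', in the following sense. If $\theta\wedge\exists y(P(y)\wedge\psi(x,y))$ is inconsistent, then $\theta\wedge\neg\exists y(P(y)\wedge\psi(x,y))$ is bad. Otherwise $\models\exists y(P(y)\wedge\exists x(\theta(x)\wedge\psi(x,y)))$. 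By completeness of $A$ (Definition 1.4) there is $b\in P(A)$ with $\theta(x)\wedge\psi(x,b)$ consistent, and this formula is bad. Trivially, $\theta$ can also be refined to decide any single formula over $A$.

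Next I will build, by a dovetailed construction of length $\omega$, a countable $A_{0}\subseteq A$ containing the parameters of $\phi$ and a tree $(\theta_{s}:s\in 2^{<\omega})$ of bad formulas over $A_{0}$. The requirements are as follows.
\begin{enumerate}
\item[(a)] $\theta_{s0}$ and $\theta_{s1}$ both imply $\theta_s$ and are contradictory.
\item[(b)] $A_0$ is complete. Each formula over $A_0$ with a $P$-witness is handled at some stage by adding to $A_0$ a witness taken from $P(A)$, which exists by completeness of $A$.
\item[(c)] For every formula $\chi$ over $A_0$ and every $s$, some $\theta_t$ with $t\supseteq s$ decides $\chi$.
\item[(d)] For every $\psi(x,y)$ over $A_0$ and every $s$, some $\theta_t$ with $t\supseteq s$ decides $\psi$ with a witness in $P(A_0)$, as above. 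The witness $b$ found in $P(A)$ is added to $A_0$.
\end{enumerate}
Since $A_0$ grows during the construction, I will fix an enumeration of pairs (stage, task) in which each formula over the current $A_0$ is scheduled infinitely often later, and at level $n$ I handle the $n$th task for all $2^n$ nodes simultaneously. The splitting formula $\chi$ needed for (a) also has parameters from $A$, and these are added to $A_0$. Everything added is countable.

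For each branch $\eta\in 2^{\omega}$, requirement (c) makes $\{\theta_{\eta|n}\}$ generate a complete type $p_\eta\in S(A_0)$; it is consistent since it is a chain of consistent formulas. By (a) the $p_\eta$ are pairwise distinct. I must check that each $p_\eta$ is good. Let $a\models p_\eta$, and suppose $\psi(a,y)$ over $A_0$ with $y$ in $P$ is satisfiable, i.e.\ $\exists y(P(y)\wedge\psi(x,y))\in p_\eta$. Requirement (d) gives a stage at which this was decided along $\eta$. The negative option is excluded, since it would put $\neg\exists y(P(y)\wedge\psi)$ into $p_\eta$. So $\psi(x,b)\in p_\eta$ for some $b\in P(A_0)$, and hence $(A_0,a)$ is complete. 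Thus $|S_{*}(A_0)|=2^{\omega}$ with $A_0$ countable and complete, contradicting full $\omega$-stability over $P$.

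The main obstacle I expect is the bookkeeping. Every branch must eventually handle every formula over the \emph{final} $A_0$, both for deciding and for the $P$-witness requirement, and $A_0$ itself must come out complete. I will handle this by standard dovetailing over an $\omega$-enumeration of the countably many tasks generated so far. The key fact that makes it work is that badness is preserved under each kind of refinement, and that refinement only ever uses completeness of the ambient $A$, never of $A_0$.
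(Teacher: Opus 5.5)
Your proposal is correct and is essentially the paper's proof. Both build a binary tree of consistent formulas implying $\phi$ over a countable complete $A'\subseteq A$, grown by dovetailing (the paper uses a bookkeeping bijection $f$ and L\"owenheim--Skolem closures $A_n$). At some full level of the tree, every formula $(\exists y\in P)\chi(x,y)$ over $A'$ is decided, either negatively or with a witness from $P(A)$ supplied by completeness of $A$, so all $2^{\omega}$ branches give good types.

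The main difference is that the paper drops your requirement (c) and simply extends each branch to an arbitrary complete type over $A'$. This suffices because goodness depends only on the $P$-existential formulas the tree already handles. You should also state (c) and (d) level-wise, for all nodes of a given level, as your construction in fact does. As literally written they are density conditions, and density alone would not guarantee that every branch meets them.
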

\begin{proof} We will assume the conclusion is false and use a routine argument to build continuum many good types over a countable complete subset of $A$, contradicting the $\omega$-stability assumption. 
So suppose $\phi(x)$ is a consistent formula over $A$ which is not in any isolated complete type over $A$.  We fix a bijection $f: \omega\setminus \{0\} \to \omega\times \omega$ such that if $f(n) = (r,i)$ then $r < n$. 

We build inductively formulas  $\psi_{\tau}(x)$ over $A$ for $\tau\in 2^{<\omega}$, countable complete subsets $A_{n}\subseteq A$ for $n<\omega$, and for each $n<\omega$ a list $\{\sigma_{(n,i)}(x): i<\omega\}$ of all formulas over $A_{n}$ of the form $\exists y \in P(\chi(x,y))$ with the properties::
\newline
(i)  $\psi_{<>}(x) = \phi(x)$, each $\psi_{\tau}(x)$ is consistent,   $\tau$ an initial segment of $\tau'$ implies $\models \psi_{\tau '}(x) \to \psi_{\tau}(x)$, and for each $\tau$ if $\tau_{1}$, $\tau_{2}$ are the $2$ successors of $\tau$ then $\psi_{\tau_{1}}(x)$ and $\psi_{\tau_{2}}(x)$ are contradictory,
\newline
(ii)  The $A_{n}$ are increasing and for each $n$ all the formulas $\psi_{\tau}(x)$ for $\tau$ of length $< n$  have parameters from $A_{n}$.
\newline
(iii)  Suppose $\ell(\tau) = n\geq 1$, and let $\sigma_{f(n)}(x)$ be $\exists y\in P (\chi(x,y))$. Then either $\models \psi_{\tau}(x) \to \neg\sigma_{f(n)}(x)$ or for some $c\in P(A_{n})$, $\models \psi_{\tau}(x) \to \chi(x,c)$. 

\vspace{2mm}
\noindent
To begin, let (by a downward Lowenheim-Skolem argument) $A_{0}$ be a countable complete subset of $A$ containing the parameters from $\phi(x)$. 
Suppose now $n< \omega$,  and $A_{r}$, $\psi_{\tau}(x)$ for $\ell(\tau) \leq n$ and also the list $\sigma_{r,i}(x)$ for $r\leq n$ and $i<\omega$, have been defined. 
Now for each $\tau$ of length $n$, $\psi_{\tau}(x)$ does not isolate a complete type over $A$ so there is a formula $\psi_{\tau}'(x)$ over $A$ such that both $\psi_{\tau}(x)\wedge \psi_{\tau}'(x)$ and $\psi_{\tau}(x)\wedge \neg\psi_{\tau}'(x)$ are consistent.   Let $\psi_{\tau ,0}''(x)  = \psi_{\tau}(x)\wedge \psi_{\tau}'(x)$ and $\psi_{\tau, 1}''(x) = \psi_{\tau}(x)\wedge \neg\psi_{\tau}'(x)$. 
So we have now defined $\psi_{\tau}''(x)$ for all $\tau$ of length $n+1$.  Now for $\tau$ of length $n+1$, if $\psi_{\tau}''(x)\wedge \sigma_{f(n+1)}(x)$ is consistent and $\sigma_{f(n+1}(x)$ is $(\exists y\in P)(\chi(x,y))$ then 
$(\exists y\in P)(\exists x)(\psi_{\tau}''(x) \wedge \chi(x,y))$ is over $A$ and consistent, so by completeness of $A$ pick $c_{\tau} \in P(A)$ such that $\psi_{\tau}''(x)\wedge \chi(x,c)$ is consistent.  Put $\psi_{\tau}(x) = \psi_{\tau}''(x)\wedge \chi(x,c)$.  Otherwise $\psi_{\tau}''(x)$ implies $\neg\sigma_{f(n+1)}(x)$, and just put $\psi_{\tau}(x) = \psi_{\tau}''(x)$. 

Now let $A_{n}'$ be $A_{n}$ together with the parameters in the formulas $\psi_{\tau}'(x)$ for $\tau$ of length $n$, and the $c_{\tau}$ for $\tau$ of length $n+1$. Let again by a  Lowenheim-Skolem argument, $A_{n+1}$ be a countable complete subset of $A$ containing $A_{n}'$.  Finally let  $\{\sigma_{n+1, i}(x): i<\omega\}$ be a list of all formulas over $A_{n+1}$ of the appropriate form $\exists y\in P(\chi(x,y))$.

\vspace{2mm}
\noindent
So we can carry out the inductive construction. Let $A'$ be the union of the $A_{n}$. So $A'$ is countable and complete. For each  $\eta\in 2^{\omega}$ let $p_{\eta}(x) = \{\psi_{\eta|n}(x): n <\omega\}$. So by (i), the $p_{\eta}$ are consistent sets of formulas over $A'$, and if $\eta \neq \eta'$, $p_{\eta}(x)$ and $p_{\eta'}(x)$ are contradictory.
 For each $\eta\in 2^{\omega}$ extend $p_{\eta}(x)$ to a complete type $p_{\eta}'(x)$ over $A'$.
\newline
{\em Claim.}  For $\eta\in 2^{\omega}$, $p_{\eta}'(x) \in S_{*}(A')$ namely is a good type over $A'$. 
\newline
{\em Proof of Claim.}  We must show that whenever a formula of the form $(\exists y\in P)(\chi(x,y))$ is in $p_{\eta}(x)$ then for some $c\in P(A')$, $\chi(x,c)\in p_{\eta}(x)$.  Now the formula $(\exists y\in P)(\chi(x,y))$ is over $A'$ so 
over $A_{n}$ for some $n$, so of the form $\sigma_{n,i}(x)$ for some $i$. Let $1\leq m < n$ be such that $f(m) = (n,i)$. Then $\psi_{\eta|m}(x) \in p_{\eta}'(x)$. So we cannot have that $\psi_{\eta|m}(x)$ implies $\neg\sigma_{n,i}(x)$.  So by (iii) 
of the construction $\models \psi_{\eta|m}(x) \to \chi(x,c)$ for some $c\in P(A_{m})\subset P(A')$. So $\chi(x,c) \in p_{\eta}'(x)$ and the Claim is proved.  \qed

So by the Claim and as the $p_{\eta}'(x)$ are distinct complete types over $A'$, we have that $S_{*}(A')$ is uncountable, contradicting the full $\omega$-stability over $P$ assumption.
This proves Proposition 2.8. 

\end{proof} 

\begin{Corollary} Suppose $T$ is fully $\omega$-stable over $P$. Then over any complete set $A$ there is a constructible (so prime) model $M_{A}$ with $P(M_{A}) = P(A)$.
\end{Corollary}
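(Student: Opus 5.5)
We build $M_A$ as a constructible set over $A$ in the usual way, using Proposition 2.8 at each stage, and check that no new elements of $P$ appear. Fix a complete set $A$. We construct by transfinite recursion a sequence $(a_{\alpha}:\alpha<\mu)$ with $A_{\alpha}=A\cup\{a_{\beta}:\beta<\alpha\}$, maintaining two conditions: each $A_{\alpha}$ is complete, and $tp(a_{\alpha}/A_{\alpha})$ is isolated.

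Completeness passes through limit stages, because a formula over a union of an increasing chain of sets already has its parameters in some earlier $A_{\alpha}$. At successor stages we use Remark 2.6: if $A_{\alpha}$ is complete and $tp(a_{\alpha}/A_{\alpha})$ is isolated, then this type is good. So $A_{\alpha}\cup\{a_{\alpha}\}$ is complete, and by induction every $A_{\alpha}$ is complete.

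Since each $A_{\alpha}$ is complete, Proposition 2.8 applies to it and gives the Tarski--Vaught step. Suppose $\exists x\,\theta(x)$ holds, with $\theta$ over $A_{\alpha}$, but $\theta$ has no realization in $A_{\alpha}$. Proposition 2.8 gives an isolated type $q\in S(A_{\alpha})$ containing $\theta$, and we take $a_{\alpha}$ to realize $q$. Dovetail the enumeration so that every such formula over every stage is eventually handled. The recursion stops at a stage $\mu$ where $M_{A}:=A_{\mu}$ satisfies the Tarski--Vaught criterion. Then $M_{A}\prec\bar M$, and it is constructible over $A$ by construction.

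Next we show that $P(M_{A})=P(A)$. Suppose not, and let $\alpha$ be least with $a_{\alpha}\in P$ and $a_{\alpha}\notin A$; then $a_{\alpha}\notin A_{\alpha}$. Let $\psi(x)$ isolate $tp(a_{\alpha}/A_{\alpha})$. The formula $P(x)\wedge\psi(x)$ is consistent, so by completeness of $A_{\alpha}$ some $b\in P(A_{\alpha})$ satisfies $\psi$. Then $b$ realizes the isolated type $tp(a_{\alpha}/A_{\alpha})$, which contains $x\neq b$; this is a contradiction. This is the same argument as in Lemma 1.8. The same reasoning applies to finite tuples via coordinates: a single element of a constructing sequence is enough.

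Primeness follows from the standard argument. If $N\succ\bar M$-elementary and $A\subseteq N$, we embed $M_{A}$ into $N$ over $A$ by induction along the construction. At stage $\alpha$ the partial map is elementary, so the image of the isolating formula for $tp(a_{\alpha}/A_{\alpha})$ is consistent and is realized in the model $N$; we send $a_{\alpha}$ to such a realization.

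The main obstacle is the bookkeeping that keeps every intermediate stage complete. Proposition 2.8 applies only to complete sets, and this is exactly what Remark 2.6 together with the limit argument supplies.
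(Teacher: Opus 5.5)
Your proposal is correct and is essentially the paper's own proof. You iterate Proposition 2.8 to obtain isolated types over each complete $A_\alpha$, use the fact that isolated types over complete sets are good (this is Remark 2.5 in the paper's numbering) to keep every $A_\alpha$ complete, and obtain $P(A_\alpha)=P(A)$ by the completeness argument of Lemma 1.8; you also spell out details the paper leaves implicit, namely the limit stages, the Tarski--Vaught bookkeeping, and the standard primeness argument.
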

\begin{proof} By iterating Remark 2.5 and Proposition 2.8 we can find $M\prec {\bar M}$ with $M = A\cup\{a_{\alpha}: \alpha < \kappa\}$ where for each $\alpha$, $tp(a_{\alpha}/A\cup \{a_{\beta}:\beta < \alpha\})$ is isolated, and each $A_{\alpha} = A\cup \{a_{\beta}: \beta <\alpha\}$ is complete with $P(A_{\alpha}) = P(A)$.

\end{proof}

\begin{Proposition} Suppose $(T,P)$ is $1$-cardinal, and $T$ is fully $\omega$-stable over $P$. Then $T$ is relatively categorical and has the Gaifman property.
\end{Proposition}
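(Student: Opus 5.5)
The Gaifman property is the easy half; relative categoricity is where the work is.

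\emph{Gaifman property.} Let $N\models T^{P}$. By Remark 1.5, $N$ is complete. Corollary 2.9 then gives a constructible model $M_{N}$ over $N$ with $P(M_{N})=P(N)=N$, so $N=M_{N}^{P}$.

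\emph{Relative categoricity.} Let $M_{1},M_{2}\models T$ and let $f:M_{1}^{P}\to M_{2}^{P}$ be an isomorphism. By quantifier elimination, $f$ is elementary as a partial map in $\bar M$. Move $M_{2}$ by an automorphism of $\bar M$ extending $f$; we may then assume $M_{1}^{P}=M_{2}^{P}=N$ and $f$ is the identity. Let $M_{N}$ be the constructible model over $N$ from Corollary 2.9. Since it is prime over $N$, there are elementary embeddings $g_{i}:M_{N}\to M_{i}$ over $N$ for $i=1,2$. These are the usual embeddings of a constructible model: realize each isolated type in the construction in turn. The isolating formula is realized in $M_{i}$ because $M_{i}$ contains the image of the current stage, as $M_{i}\prec\bar M$.

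Each image $g_{i}(M_{N})$ is an elementary submodel of $M_{i}$ with the same $P$-part $N$. By $1$-cardinality, in the form ``no proper elementary pair with the same $P$-part'' recalled before Lemma 1.16, we get $g_{i}(M_{N})=M_{i}$. Hence $g_{2}\circ g_{1}^{-1}:M_{1}\to M_{2}$ is an isomorphism extending the identity on $N$. Undoing the initial automorphism gives an isomorphism extending $f$.

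The main obstacle is primality. Corollary 2.9 asserts ``constructible (so prime)'', so I would rely on it, but I should check that the embedding argument works in this setting. The point is that $M_{N}$ is constructed over $N$ by a sequence of types each isolated over the previous stage $A_{\alpha}$. Any partial elementary map $A_{\alpha}\to M_{i}$ over $N$ therefore extends one more step, since an isolated type over a set contained in a model is realized in that model. Continuous unions are taken at limits.

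Nothing further about goodness of types is needed for this direction. The goodness and completeness of the stages only matter in building $M_{N}$ with $P(M_{N})=N$, which Corollary 2.9 already provides. Note also that $1$-cardinality is used in an essential way here: without it, $M_{i}$ could be a proper elementary extension of $g_{i}(M_{N})$ with the same $P$-part.
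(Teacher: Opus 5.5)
Your proof is correct and follows the paper's argument. The Gaifman property is Corollary 2.9 applied to $N$, which is complete by Remark 1.5. Relative categoricity comes from embedding the constructible model $M_{N}$ over $N$ into $M_{1}$ and $M_{2}$, and using $1$-cardinality to see that these embeddings are onto. Your extra steps only make explicit what the paper leaves implicit: reducing an arbitrary $f$ to the identity via quantifier elimination and homogeneity of $\bar M$, and checking that $M_{N}$ is prime.
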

\begin{proof} Let $M_{1}$, $M_{2}$ be models of $T$ with $M_{1}^{P} = M_{2}^{P} = N$.  As $N$ is complete, let $M_{N}$ be the constructible model over $N$ given by Corollary 2.9. So $P(M_{N}) =N$, and $M_{N}$ elementarily embeds in each of $M_{1}, M_{2}$ over $N$. By $1$-cardinality these elementary embeddings are isomorphisms, so $M_{1}$ is isomorphic to $M_{2}$ over $N$. 

The Gaifman property is  the special case of Corollary 2.9 when $A$ is a model of $T^{P}$.

\end{proof}

In particular from Proposition 2.9 we conclude that if $T$ is relatively $\omega$-categorical and fully $\omega$-stable over $P$, then $T$ is relatively categorical. Hence the examples in \cite{Hart-Shelah} which are relatively $\omega$-categorical but not relatively $\kappa$-categorical for some $\kappa > \omega$ are not fully $\omega$-stable over $P$. 
When  $T$ is as in Lemma 1.16 ($M = dcl(P(M))$ for all $M\models T$), then $T$ is fully $\omega$-stable over $P$:  Suppose $A$ is countable and complete. By Lemma 1.6 let $M$ be (countable) model of $T$ containing $A$ with $P(M) = P(A)$. So $A\subseteq dcl(P(M))$. So any good type over $A$ corresponds to a good type over $P(M)$, which must be realized in $M$. As $M$ is countable there can be only countably many such good types.
A similar argument works when $M = acl(P(M))$ for all $M\models T$.

The next level in ``complexity" in a sense is when $T$ is an (almost) internal cover of $T^{P}$, where one needs the parameter set $A$ outside $P(M)$  such that $M = dcl(A\cup P(M))$  (or with $acl$).
The most basic version is $n$-dimensional vector spaces over a field.  The (incomplete) theory of an $n$-dimensional vector space $V$ over a field $F$, consists of two sorts, one for a field  (equipped with its field structure) and one for an $n$-dimensional vector space $V$ over $F$ with its addition, zero and scalar multiplication $\alpha: F\times V \to V$.  With the obvious theory. So $P$ will stand for the field sort and the complement of $P$ for the vector space sort. Call this 
incomplete theory $T_{n}$.  Of course $T_{n}$ is relatively categorical, in the obvious sense. 
\begin{Lemma} Let $T$ be a relatively categorical expansion of $T_{n}$. Then $T$ is relatively $\omega$-stable.

\end{Lemma}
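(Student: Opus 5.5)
I read ``relatively $\omega$-stable'' as fully $\omega$-stable over $P$ in the sense of Definition 2.6. So the plan is to fix a countable complete set $A$ and show that $S_{*}(A)$ is countable.

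The strategy is to transfer good types over $A$ to types over the field part $F_{0}=P(A)$. Such types are isolated by $(\omega,\omega)$-categoricity: relative categoricity gives relative $\omega$-categoricity, so by Lemma 1.15 and Lemma 1.9 every model $M$ of $T$ is atomic over $P(M)$. There are only countably many formulas over the countable set $F_{0}$, so only countably many isolated types over $F_{0}$ in each finite arity. It therefore suffices to inject $S_{*}(A)$ into the set of isolated types over $F_{0}$ (of finite tuples, of varying arity).

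The key steps, in order, are as follows.

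\begin{enumerate}
\item Fix the setting. Let $p\in S_{*}(A)$ and let $a$ realize $p$. Since $(A,a)$ is complete and countable, Lemma 1.6 gives a countable model $M_{a}\supseteq A\cup\{a\}$ with $P(M_{a})=P(A)=F_{0}$. Then $V(M_{a})$ is an $n$-dimensional $F_{0}$-vector space containing the vector part of $A$.
\item Choose a basis inside $A$. Let $W\subseteq V(M_{a})$ be the $F_{0}$-span of the vectors in $A$, of dimension $k\le n$. Choose an $F_{0}$-basis $w$ of $W$ consisting of elements of $A$. This is possible because $W$ is spanned by elements of $A$. Fix one such $w$, independent of $p$; it depends only on $A$, since ``the span of $A$ over $P(A)$'' is computed the same way in every such $M_{a}$.
\item Reduce to types over $F_{0}$. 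Every element of $A$ is then of the form $g(w,d)$ for some tuple $d$ from $F_{0}$, where $g$ is a fixed $\emptyset$-definable linear combination. Hence $A\subseteq dcl(w,F_{0})$, and so $tp(a/A)$ is determined by $tp(a\,w/F_{0})$.
\item Count. By Lemma 1.9 applied to $M_{a}$, the type $tp(a\,w/F_{0})$ is isolated by a formula over a finite subset of $F_{0}$. The map $p\mapsto tp(a\,w/F_{0})$ is therefore an injection of $S_{*}(A)$ into a countable set, which finishes the proof.
\end{enumerate}

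The step needing the most care is checking that the reduction in steps 2 and 3 is independent of the choice of realization $a$ and of the model $M_{a}$. Concretely, one must check that $w$ spans the vector part of $A$ over $F_{0}$ in the monster model. This uses that linear (in)dependence over $F_{0}$ of vectors of $A$ is witnessed inside $\bar M$, and that the coefficients exhibiting elements of $A$ as combinations of $w$ lie in $P(\bar M)$. The completeness of $A$ is what forces those coefficients into $P(A)=F_{0}$ rather than into some larger field.

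If an expansion adds structure beyond $T_{n}$, that structure affects only which types occur, not the counting. The counting uses only the $dcl$ relation from $T_{n}$ together with atomicity over $P$, which relative categoricity supplies.
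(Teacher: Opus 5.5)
Your proposal is correct and follows essentially the paper's argument: rewrite a good type over $A$ as a type over $P(A)$ together with a finite $F$-basis of the vector part of $A$, then use atomicity over $P$ (from $(\omega,\omega)$-categoricity and Lemma 1.9) to see that these types are isolated, hence only countably many. The only cosmetic difference is where atomicity is applied. You apply it directly in the model $M_{a}$ built around $A\cup\{a\}$, whereas the paper works in a single model $M\supseteq A$ and uses relative categoricity to realize $tp(\bar b a/F)$ there.
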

\begin{proof} Let $A$ be a countable complete set. Then by  Lemma 1.6 there is a countable model $M$ containing $A$ with $P(M) = P(A)$. We can write $M$ as an expansion of $(V,F)$ where $F = P(M)$ and $V$ is the complement.
Hence $A = F \cup B$ where $B\subseteq V$.  Let ${\bar b}$ be a maximal $F$-linearly independent subset of $B$, so ${\bar b}$ is finite and $B\subseteq dcl(F,{\bar b})$.  Let $p(x)$ be a good complete type over $B$, so $p(x)$ can be rewritten as a good complete type over $F\cup\{\bar b\}$.  Let $a$ realize $p(x)$.  So $tp({\bar b},a/F)$ is realized in $M$. In particular, as $M$ is atomic over $P(M)$. $tp({\bar b},a)/F)$ is isolated, so it follows that $p(x)$ is isolated. So there can be only countable many good types over $A$.

\end{proof}

We will end with an example of a relatively categorical theory $T$ which is an internal cover of $T^{P}$, but is not fully $\omega$-stable, and not even fully stable over $P$.   

 First the syntax: we have unary predicates  $P, G, I, X, Y$ and the theory says that these are disjoint, $P$ is the union of $G$ and $I$, and $\neg P$ the union of $X$ and $Y$. $+$ is a commutative group operation on $G$. 
 We have $F:G\times X \to X$, and the theory says this is a strictly transitive group action.  $R$ is a binary predicate symbol and the theory says $R\subseteq G\times I$ and for $i\in I$, $R(x,i)$ defines a subgroup of $G$ of index $2$ (which we call $G_{i}$).   

We also have a function symbol $h$ and the theory says that $h$ is a surjective function from $Y$ to $I$, and that for each $i\in I$, $h^{-1}(i)$ (which we call $Y_{i}$) has cardinality $2$.  We have another function symbol which identifies  $Y_{i}$ with the  set of orbits under $G_{i}$ in $X$. Specifically we have a surjective function $j:X\times I \to Y$, and axioms saying that  $j(x,i) = j(y,i)$ iff $x,y$ are in the same orbit under $G_{i}$, and for a given $i\in I$, the image of $X$ under $j(-,i)$ is precisely $Y_{i}$. 

Call this language $L$.  We describe an $L$-structure $M$ satisfying all the axioms above. Let $G(M)$ be $(\Z/2\Z)^{(\omega)}$  (direct sum). Let  $I(M) = \omega$.  Let $G_{i}(M)$ be  those elements of $G(M)$ with $ith$-coordinate 
$0$.  Let $X(M)$ be a principal homogeneous space for $G(M)$, where $F(M)$ is the action.  Let $Y_{i}(M)$ be the orbits of $G_{i}(M)$ on $X$ and the interpretations of $h$ and $j$ in $M$ the obvious things. 
Let $P(M)$ be the disjoint union of $G(M)$ and $I(M)$.
Let $T = Th(M)$ in the language $L$. 

For the record, we let $M_{0}$ be the relativized reduct of $M$ where we ignore the $Y$  sort and the symbols $h$ and $j$. 
Let $T_{0} = Th(M_{0})$ in the appropriate language $L_{0}$. 

\begin{Remark} $M$ is contained in $M_{0}^{eq}$.
\end{Remark}
\begin{proof} In $M$ we are just naming the orbits under the action of $G_{i}(M_{0})$ on $X(M_{0})$. 
\end{proof}

\begin{Proposition} (i) $T_{0}$ and $T$ are relatively categorical (with respect to $P$).
\newline
(ii) $T_{0}$ and $T$ are internal covers of $T_{0}^{P}$, $T^{P}$ respectively.
\newline
(iii) $T_{0}$ is (fully) $\omega$-stable over $P$.
\newline
(iv) $T$ is not (fully) $\omega$-stable over $P$. Moreover for any infinite cardinal $\kappa$ there is a complete set $A_{\kappa}$ of cardinality $\kappa$ such that  $|S_{*}(A_{\kappa})| = 2^{\kappa}$.  So $T$ is not fully stable over $P$ (in a strong way).  
\end{Proposition}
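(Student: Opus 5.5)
We handle (ii) first, since it is the base for (i). In any model $M$ of $T_0$ fix $x_0\in X(M)$. Then $X(M)=\{F(g,x_0):g\in G(M)\}$, so $X\subseteq dcl(P,x_0)$. For $T$, each $y\in Y$ satisfies $y=j(x,h(y))$ for some $x\in X$, and therefore $y=j(F(g,x_0),i)$ with $g\in G$, $i\in I$. Hence $M=dcl(P(M),x_0)$, uniformly, and both $T_0$ and $T$ are internal covers of their $P$-parts. For (i) I follow the (ii)$\Rightarrow$(iii) argument of Lemma 1.19, but without relying on $(\omega,\omega)$-categoricity. Let $M_1,M_2$ have a common $P$-part $N$, and pick $x_1\in X(M_1)$, $x_2\in X(M_2)$. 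The natural map sends $F(g,x_1)\mapsto F(g,x_2)$ and $j(F(g,x_1),i)\mapsto j(F(g,x_2),i)$, and is the identity on $N$. It is well defined and bijective because the action is regular and because the relation $j(F(g,x),i)=j(F(g',x),i)$ holds iff $g-g'\in G_i$, which depends only on $N$. It preserves $F,h,j,R$ and $+$. So it is an isomorphism over $N$, and this proves relative categoricity for both theories.

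For (iii), a countable complete $A$ for $T_0$ has the form $P(A)\cup B$ with $B\subseteq X$. If $B\neq\emptyset$, then $A\subseteq dcl(P(A),b)$ for a single $b\in B$. By Lemma 1.6, $A$ extends to a countable model $M$ with $P(M)=P(A)$. Any $a$ realizing a good type over $A$ is, by Lemma 1.6 again, inside a model $M'$ with $P(M')=P(A)$. In $M'$ we have $a=F(g,b)$ with $g\in P(A)$, or $a\in P(A)$, so the type is realized in $M$ (or is determined by $g$). This gives countably many good types. If $B=\emptyset$, the same holds by Remark 2.7(iv)-style reasoning: $(\omega,\omega)$-categoricity follows from (i), and any good type over $P(A)$ is realized in the unique countable model over $P(A)$. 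I would also check that a good type of an $n$-tuple is determined by its $X$-coordinates expressed via $g$'s. This step is routine.

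The main obstacle is (iv). I would build $A_\kappa$ by taking $G$ with $I$ of size $\kappa$, and adjoining, for each $i\in I$, one element $y_i\in Y_i$ but no element of $X$. Completeness of $A_\kappa$ should follow from quantifier elimination. A formula over $P\cup\{y_i\}$ asking for a $P$-witness constrains only $P$-data, because with no $X$-point present the $y_i$ only "name a coset choice" and impose no condition on $P$. Consider a type of a new $x\in X$ over $A_\kappa$. It says, for each $i$, whether $j(x,i)=y_i$, which amounts to choosing a coset of $G_i$ for each $i$. Every $\eta\in 2^{I}$ is consistent by compactness, since the $G_i$ are independent index-2 subgroups. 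The delicate point is goodness, that is, that $(A_\kappa,x)$ is still complete. A formula $\exists z\in P\,\chi(x,z,\ldots)$ realized by $x$ could define a $g\in G$ with $F(g,x)$ in a specified position. But there is no base point in $X$ among the parameters, so such $g$ would only be pinned down relative to $x$ itself. The coset conditions $j(F(g,x),i)=y_i$ are then satisfied by $g$ in a coset of $G_i$, and a suitable $g\in P(A)$ exists because $G(A)$ surjects onto each finite product of the quotients $G/G_i$. I expect to verify this carefully via quantifier elimination, describing types over such sets by the atomic data $j(F(g,x),i)$. This produces $2^\kappa$ good types over a set of size $\kappa$. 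Taking $\kappa=\omega$ also refutes full $\omega$-stability.
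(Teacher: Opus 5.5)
\textbf{Verdict: genuine gap, in (iv).} Parts (i)--(iii) are fine and essentially follow the paper. In (i) you check the isomorphism directly on the $Y$-sort instead of reducing $T$ to $T_0$ via $M\subseteq M_0^{eq}$. In (iii) you note that good types are realized in $dcl(A)$ rather than that they are isolated. Both are harmless variations.

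The problem is in (iv), exactly where you flag the delicate point: you never prove that $A_\kappa$ is complete, nor that the types $tp(a_\eta/A_\kappa)$ are good. Both are deferred to a quantifier-elimination analysis ``via the atomic data $j(F(g,x),i)$'' that is neither carried out nor available. $T$ is just $Th(M)$, and no QE in a natural language has been established for it. Under the paper's convention of a Morleyized relational language, QE gives no usable description of definable sets. Your heuristic only handles formulas $\exists z\in P\,\chi$ that impose finitely many coset conditions on one $G$-variable. A general $\chi$ may involve several $P$-variables from both $G$ and $I$, the structure $(G,I,R)$ with parameters from $N$, and the $y_i$ in arbitrary combination.

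The missing idea makes all of this automatic: place everything inside a model with the right $P$-part.
\begin{enumerate}
\item Fix $N$ with $G(N)=(\Z/2\Z)^{(\kappa)}$ and $I(N)=\kappa$, so that $|A_\kappa|=\kappa$. By (i), (ii) and Proposition 2.1(ii), there is $M_1\models T$ with $M_1^P=N$. Pick $x_0\in X(M_1)$, so $M_1=dcl(N,x_0)$.
\item The map you built in (i), applied with $M_1=M_2=\bar M$, shows that any two points of $X(\bar M)$ are conjugate by an automorphism fixing $P(\bar M)$ pointwise.
\item Hence for every $a\in X(\bar M)$, $dcl(N,a)$ is an elementary substructure of $\bar M$ with $P$-part exactly $N$. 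Since each $Y_i$ has exactly two elements, its $Y$-part is all of $h^{-1}(I(N))$, which contains your $y_i$.
\item So $A_\kappa\subseteq M_1$ and $A_\kappa\cup\{a_\eta\}\subseteq dcl(N,a_\eta)$. Each lies in a model whose $P$-part equals $P(A_\kappa)$, which gives at once the completeness of $A_\kappa$ and the goodness of every $tp(a_\eta/A_\kappa)$.
\end{enumerate}
The paper takes $A=N\cup Y(M_1)$, i.e.\ both points of each $Y_i$. Your choice of one point per $Y_i$ has the same definable closure, so with this argument it works equally well.
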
 
\begin{proof} (i) By Remark 2.12 it is enough  to prove it for $T_{0}$. But it is immediate: if $M_{1}, M_{2}$ are models of $T_{0}$ with $M_{1}^{P} = M_{2}^{P} = N$ say, then choose any $a_{1}\in X(M_{1})$ and $a_{2}\in X(M_{2})$. Then 
the map $f$ from $M_{1}$ to $M_{2}$ which is the identity on $N$ and takes $g\cdot a_{1}$ to $g\cdot a_{2}$ for $g\in G(N)$ is visibly an isomorphism between $M_{1}$ and $M_{2}$.  (We only have to check that it preserves $F$). 
\newline
(ii) is also immediate.
\newline
(iii) is as in Lemma 2.11:   Let $A$ be a complete countable subset of (the universe of) a saturated model. We know there is a countable model , say $M_{1}$ containing $A$ and with $P(M_{1}) = P(A)$.  If $A = P(A)$ then all good types 
over $A$ are realized in $M_{1}$ (by relative categoricity). So there are only countably many.  Otherwise let $b\in X(M_{1})\cap A$.  So as $M_{1}\in dcl(P(M_{1},b)$, $A$ is contained in $dcl(P(M_{1}), b)$ so any good type $p(x)$ over $A$ 
may be assumed to be over $P(M_{1}), b)$.  If $a$ realizes $p$, then again $tp(a,b/P(M_{1}))$ is  realized in $M_{1}$, so isolated, So $p(x)$ is isolated. So $S_{*}(A)$ is countable.
\newline
(iv)   Fix an infinite cardinal $\kappa$.  Let $N_{1}$ be the following model of $T^{P}$. $G(N) = (\Z/2\Z)^{(\kappa)}$ and $I(N_{1}) = \kappa$. So by parts (i) and (ii) and Proposition 2.1. $N_{1} = M_{1}^{P}$ for some model $M_{1}$ of $T$.  Consider $Y(M_{1})$  which has cardinality $\kappa$.  Let $A = N_{1}\cup Y(M_{1})$ which is clearly complete. Let $f$ be a function which for each $\alpha< \kappa$ picks out one of the two elements of $Y_{\alpha}(M_{1})$. Note that there are $2^{\kappa}$ such $f$.
Let $p_{f}(x)$ be  the (a priori partial) type over $A$ saying that $x\in X$ and for each $\alpha<\kappa$ that the orbit of $x$ under $G_{\alpha}(M_{1})$ is precisely $f(\alpha) \in Y_{\alpha}$. 
Then by compactness $p(x)$ is consistent, so realized (in the big saturated model) by $a_{f}$ say.  We know that $``x\in X"$ isolates a complete type over $N_{1}$ (in the sense of $T$ and in the sense of $T_{0}$ too).  So $dcl(N_{1},a_{f})$ is a model $M_{1}'$ of $T$ with $P(M_{1}') = N_{1}$ and which is isomorphic to $M_{1}$ over $N_{1}$.  Moreover $Y(M_{1}') = Y(M_{1})$. It follows that $tp(a_{f}/A)$ is good.  So there are $2^{\kappa}$ good types over $A$.

\end{proof} 

\begin{Remark} (i) So by  Proposition 2.13, full $\omega$-stability (over $P$) and full stability (over $P$) are not preserved by passing to $T_{0}^{eq}$, so are not particularly robust notions.
\newline
(ii)  On the other hand, adapting \cite{Pillay-Shelah}, \cite{Shelah-classification-II}, and \cite{S-U} to the $\omega$-stable case would be useful.

\end{Remark}

\end{document}